  \newtheorem*{thm}{Theorem}
  \newtheorem{lem}{Lemma}
\begin{document}

  \title{Smallest Squared Squares}
  \author{Lorenz Milla}
  \email{lorenz.milla@gmx.net}
  \affiliation{Heidelberg, Germany}
  \date{July 9, 2013}

\begin{abstract} 
In this paper we have a look at squared squares with small integer sidelengths, where the only restriction is that any two subsquares of the same size are not allowed to share a full border. We prove that there are exactly two such squared squares (and their mirrored versions) up to and including size 17x17. They are shown in Figure 1 (page \pageref{fig1116}).
\end{abstract}

\maketitle

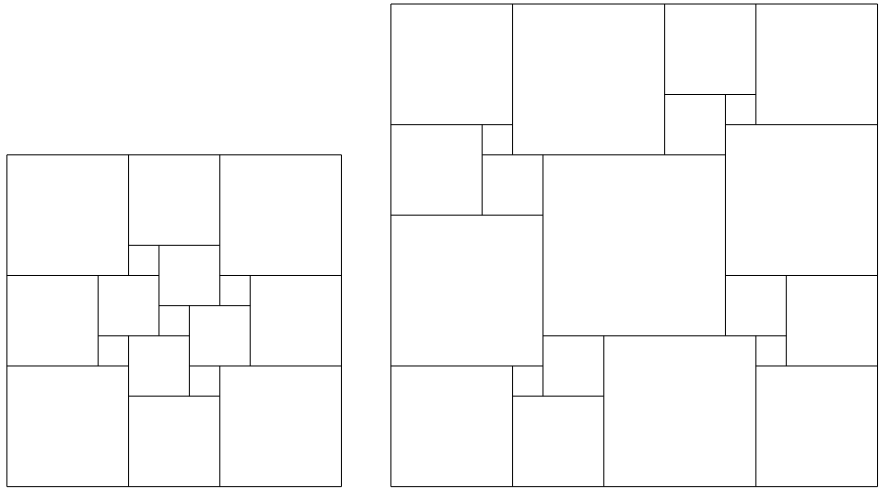
\begin{figure*}[!ht]
\centering
\subfigure{\begin{pspicture}(0,0)(11,11)
        \psline(0,0)(0,11)
        \psline(0,0)(11,0)
        \psline(4,0)(4,5)
        \psline(4,4)(0,4)
        \psline(4,3)(7,3)
        \psline(7,0)(7,4)
        \psline(6,4)(11,4)
        \psline(6,3)(6,6)
        \psline(6,5)(3,5)
        \psline(3,4)(3,7)
        \psline(0,7)(5,7)
        \psline(5,5)(5,8)
        \psline(4,8)(7,8)
        \psline(4,7)(4,11)
        \psline(7,6)(7,11)
        \psline(5,6)(8,6)
        \psline(8,4)(8,7)
        \psline(7,7)(11,7)
        \psline(11,0)(11,11)
        \psline(11,11)(0,11)
    \end{pspicture}}\qquad
\subfigure{\begin{pspicture}(0,0)(16,16)
        \psline(0,0)(0,16)
        \psline(0,16)(16,16)
        \psline(16,16)(16,0)
        \psline(16,0)(0,0)
				\psline(4,0)(4,4)
        \psline(3,9)(3,12)
        \psline(4,11)(4,16)
        \psline(5,3)(5,11)
        \psline(7,0)(7,5)
        \psline(4,3)(7,3)
        \psline(9,11)(9,16)
        \psline(11,5)(11,13)
        \psline(12,0)(12,5)
        \psline(12,12)(12,16)
        \psline(13,4)(13,7)
        \psline(0,4)(5,4)
        \psline(0,9)(5,9)
        \psline(0,12)(4,12)
        \psline(3,11)(11,11)
        \psline(5,5)(13,5)
        \psline(12,4)(16,4)
        \psline(11,7)(16,7)
        \psline(11,12)(16,12)
        \psline(9,13)(12,13)
    \end{pspicture}}
\label{fig1116}
\caption{Smallest Squared Squares (11x11 and 16x16)}
\end{figure*}

\section{Introduction}
\noindent\textbf{Definitions.} The general definitions, formulated by S. Anderson (priv. comm. and \cite{anderson2013}): ``A \emph{squared rectangle} is a rectangle dissected into a finite number, two or more, of squares, called the \emph{elements} of the dissection. If no two of these squares have the same size the squared rectangle is called \emph{perfect}, otherwise it is \emph{imperfect}. The \emph{order} of a squared rectangle is the number of constituent squares. The case in which the squared rectangle is itself a square is called a \emph{squared square}. The dissection is \emph{simple} if it contains no smaller squared rectangle, otherwise it is \emph{compound}.''

Additionally, we call a dissection \emph{trivial} if it contains two elements of the same size which share a full border. In this paper we analyze \emph{nontrivial} squared squares, and for brevity we say ``squared square'' instead of ``nontrivial squared square''. Note that all simple perfect squared squares, all compound perfect squared squares and all simple imperfect squared squares are nontrivial, but it is possible to find many trivial compound imperfect squared squares with small integer size, for example a square of size 2x2, dissected into four squares of size 1x1.

By a result of M. Dehn \cite{dehn1903}, a rectangle can be tiled by a finite number of squares if and only if the rectangle has commensurable sides. From commensurability it follows that the squared rectangle's sides and elements can all be given in integers without any common divisor. This explains what we call a \emph{small} squared square.

This paper is structured as follows: in Sec.~\ref{Sect2}, we prove some basic lemmata and show that there is exactly one squared square up to and including size 11x11. In Sec.~\ref{Sect3} we prove there is no squared square of size 12x12 to 15x15. In Sec.~\ref{Sect4} we prove that there is exactly one squared square of size 16x16. In Sec.~\ref{Sect5} we prove there is no squared square of size 17x17. In Sec.~\ref{Sect6} we prove the final theorem as formulated in the abstract.

\textbf{Remark.} The techniques used in this paper have been used by Ian Gambini in his doctoral thesis \cite{gambini1999}. Gambini analyzed \emph{perfect} squa\-red squares and enumerated all perfect squared squares up to size 130x130 with help of the computer, and showed that there are (up to and including size 130x130) exactly five perfect squared squares: three of size 110, one of size 112, and one of size 120.

\section{Up to size 11x11}
\label{Sect2}

\begin{lem}\label{lem1}All squares at the border of a dissection have at least size three.\end{lem}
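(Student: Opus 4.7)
The plan is to argue by contradiction, ruling out border squares of size $1$ and then size $2$ in turn. Throughout, I fix coordinates so the big square is $[0,N]\times[0,N]$ and consider a suspect border square whose bottom side lies on the outer edge $y=0$; the other three border positions are handled by symmetry, and corner cases differ only in that one lateral neighbor is replaced by the outer boundary.

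\textbf{Size $1$.} Suppose a $1\times 1$ square $Q=[a,a+1]\times[0,1]$ lies on the border. Any neighbor meeting $Q$'s left side must have its right side cover the unit segment $\{a\}\times[0,1]$; by integrality of sidelengths the covering consists of a single square $L$ with bottom on $y=0$. If $L$ has size $1$ it shares a full side with $Q$, contradicting nontriviality. Otherwise $L$ has size $\geq 2$ and protrudes above $Q$, so it blocks anything sitting atop $Q$ from extending left past $x=a$. The symmetric argument on the right confines all squares meeting $Q$'s top edge to the width-$1$ strip $[a,a+1]\times[1,\cdot)$. The unique integer tiling of that strip at the level $y=1$ is by a single $1\times 1$ square, which then shares a full side with $Q$---again a contradiction.

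\textbf{Size $2$.} Suppose a $2\times 2$ square $Q'=[a,a+2]\times[0,2]$ lies on the border. Its lateral neighbors tile the unit-height, length-$2$ segments $\{a\}\times[0,2]$ and $\{a+2\}\times[0,2]$; by the previous step they cannot have size $1$, and by nontriviality they cannot have size $2$ (as they would share a full side with $Q'$). Hence each is a single square of size $\geq 3$, protruding at least one unit above $Q'$ and thereby confining anything sitting on $Q'$'s top edge to the width-$2$ strip above $Q'$. The only integer tilings of the top edge $[a,a+2]\times\{2\}$ are either one $2\times 2$ square (which shares a full side with $Q'$) or two $1\times 1$ squares side by side (which share a full side with each other). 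Both violate nontriviality.

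The main obstacle is purely combinatorial bookkeeping: one has to verify (i) that each lateral neighbor really is a \emph{single} square, which uses integrality together with the fact that the interface has integer length $1$ or $2$, and (ii) that the list of integer tilings of the thin strip above the suspect border square is as short as claimed. Once these two points are pinned down, nontriviality and the induction from size $1$ to size $2$ do the rest, and the corner subcases require no new ideas.
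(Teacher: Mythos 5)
Your argument is correct and is essentially the paper's proof: exclude size $1$ by noting both lateral border neighbors must have size $\geq 2$ (nontriviality), which squeezes the adjacent square into a forbidden unit square; then exclude size $2$ by upgrading the lateral neighbors to size $\geq 3$ (using the size-$1$ exclusion plus nontriviality), which forces a trivial $2$ or $1{+}1$ covering of the remaining edge. The paper merely treats the corner and non-corner positions as separate explicit pictures, which you correctly fold into a remark.
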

\begin{proof}We prove that sizes 1 and 2 are impossible:

A square with size 1 in a \emph{corner} leads to $a\geq 2$ and thus to $b=1$ which is not allowed:
    \begin{center}
    \begin{pspicture}(0,0)(3,3)
      \rput(0.5,1.5){b}
      \rput(1.5,2.5){a}
      \psline(0,0)(0,3)
      \psline(0,3)(3,3)
      \psline(1,2)(1,3)
      \psline(0,2)(1,2)
      \psline[linestyle=dashed](1,2)(1,1)
    \end{pspicture}
    \end{center}

If a square with size 1 is at the \emph{border} (and not in the corner), it has two neighbors $a$ and $b$ at the border:
\begin{center}
    \begin{pspicture}(0,1)(7,3)
      \rput(2.5,2.5){a}
      \rput(4.5,2.5){b}
      \rput(3.5,1.5){c}      
        \psline(0,3)(7,3)
        \psline(3,2)(3,3)
        \psline(3,2)(4,2)
        \psline(4,2)(4,3)
        \psline[linestyle=dashed](4,1)(4,2)
        \psline[linestyle=dashed](3,1)(3,2)
    \end{pspicture}
\end{center}
Because $a\geq 2$ and $b\geq 2$ it follows $c = 1$, which is not allowed.

A square with size 2 in a \emph{corner} leads to $a\geq 3$ and thus to $b=c=1$ which is not allowed:
    \begin{center}
    \begin{pspicture}(0,0)(4,4)
      \rput(0.5,1.5){b}
      \rput(1.5,1.5){c}
      \rput(2.5,3.5){a}
      \psline(0,0)(0,4)
      \psline(0,4)(4,4)
      \psline(2,2)(2,4)
      \psline(0,2)(2,2)
      \psline[linestyle=dashed](2,2)(2,1)
    \end{pspicture}
    \end{center}

If a square with size 2 is at the \emph{border} (and not in the corner), it also has two neighbors $a$ and $b$ at the border:	
\begin{center}
    \begin{pspicture}(0,0)(8,3)
      \rput(2.5,2.5){a}
      \rput(5.5,2.5){b}
      \rput(3.5,0.5){c}
      \rput(4.5,0.5){d}
        \psline(0,3)(8,3)
        \psline(3,1)(3,3)
        \psline(3,1)(5,1)
        \psline(5,1)(5,3)
        \psline[linestyle=dashed](5,0)(5,1)
        \psline[linestyle=dashed](3,0)(3,1)
    \end{pspicture}
\end{center}
It is already shown (see above) that $a\neq 1$ and $b\neq 1$. Because they can't have the same size as their middle neighbor, it follows $a\geq 3$ and $b\geq 3$. It follows $c\leq 2$ and because $c\neq 2$ it follows $c = 1$ and thus $d=1$, wich is not allowed.
\end{proof}

\begin{lem}\label{lem2}Squares in a corner of a dissection have at least size four.\end{lem}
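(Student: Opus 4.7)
The plan is to use Lemma 1 to reduce the problem to ruling out a corner square of size exactly 3, and then derive a geometric contradiction from the forced overlap of its two border-neighbors. Since Lemma 1 already rules out border squares (hence corner squares) of size 1 or 2, it suffices to exclude size 3.

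Suppose for contradiction that a corner square $S$ has size 3; by reflective symmetry I would place it at the top-left corner, so that $S$ occupies $[0,3]\times[N-3,N]$ inside the outer square $[0,N]^2$. Let $a$ denote the border square immediately to the right of $S$ along the top edge, and $b$ the border square immediately below $S$ along the left edge. Since $S$ uses up the entire top border from $x=0$ to $x=3$ (and the entire left border from $y=N-3$ to $y=N$), the square $a$ must have its top-left corner at $(3,N)$ and the square $b$ its top-left corner at $(0,N-3)$, so they occupy $[3,3+a]\times[N-a,N]$ and $[0,b]\times[N-3-b,N-3]$ respectively. Lemma 1 gives $a,b\geq 3$, and the nontriviality hypothesis forbids $a=3$ or $b=3$ (either would share a full border of length 3 with $S$), hence $a\geq 4$ and $b\geq 4$.

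The contradiction is then immediate. For any small $\varepsilon>0$, consider the point $(3+\varepsilon,\,N-3-\varepsilon)$: it lies in the interior of $a$ because $3<3+\varepsilon<3+a$ and $N-a\leq N-4<N-3-\varepsilon<N$, and it also lies in the interior of $b$ because $0<3+\varepsilon<4\leq b$ and $N-3-b<N-3-\varepsilon<N-3$. A single point cannot lie in the interior of two distinct elements of the dissection, so no such $S$ can exist.

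I do not foresee a serious obstacle here; the argument is essentially the same style of small case analysis as Lemma 1. The only point that deserves verification is the claim that $a$ and $b$ are genuine border-neighbors of $S$ with their edges aligned at $x=3$ and $y=N-3$ as described, and this follows automatically from $S$ filling the top-left strip up to those coordinates.
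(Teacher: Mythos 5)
Your proof is correct and follows the same route as the paper: Lemma 1 reduces the problem to a size-3 corner square, nontriviality pushes both border neighbors up to size at least 4, and the resulting overlap gives the contradiction. The only difference is that you make the overlap explicit with coordinates where the paper simply asserts it, which is a harmless elaboration.
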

\begin{proof} Combined with Lemma \ref{lem1} we just have to deal with one case: Size 3 in a corner.
    \begin{center}
    \begin{pspicture}(0,0)(4,4)
      \rput(0.5,0.5){b}
      \rput(3.5,3.5){a}
      \psline(0,0)(0,4)
      \psline(0,4)(4,4)
      \psline(3,1)(3,4)
      \psline(0,1)(3,1)
    \end{pspicture}
    \end{center}
But here, both $a$ and $b$ are at the border of the dissection, so (from Lemma \ref{lem1}) we know that $a\geq 3$ and $b\geq 3$. But we already have a 3x3 in the corner, thus $a\geq 4$ and $b\geq 4$. This leads to an overlap, thus we can't have a square of size 3x3 in a corner.
\end{proof}

\begin{lem}\label{lem3}In every squared square, at least two squares touching the borders don't sit in the corners. Equivalent: At least six squares are touching the borders.\end{lem}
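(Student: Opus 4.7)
The plan is proof by contradiction. I will assume that at most one non-corner square touches the border --- i.e., there are at most $5$ border squares in total --- and derive an impossibility. First I will note that the four corners of the dissection are always occupied by four \emph{distinct} elements, since a single square can occupy two corners of the outer square only if it coincides with the whole dissection. Under the hypothesis, because every non-corner border square lies on exactly one side, at least three of the four sides of the outer $N\times N$ square are covered on the boundary by just their two corner squares.

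Next I would extract an algebraic constraint from each such ``corner-only'' side: its two corner squares must exactly tile the boundary segment, so their sizes sum to $N$ (anything less would leave a gap, anything more would create an overlap). Labelling the corner sizes $a,b,c,d$ cyclically starting from the top-left and writing the sum-to-$N$ equation for three consecutive sides --- say top, right, and bottom --- one obtains $a+b=b+c=c+d=N$, hence $a=c$ and $b=d$. The remaining (left) side then automatically satisfies $d+a = b+a = N$, so it also has no room for a non-corner border square. Thus the ``at most one extra'' hypothesis actually forces \emph{all} four sides to have only corner squares.

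The final step is to rule out the resulting configuration $a=c$, $b=d$, $a+b=N$. If $a=b$, then all four corners have side $N/2$ and any two adjacent corners share a full border of length $N/2$ between squares of equal size, contradicting non-triviality. If $a\neq b$, I pick WLOG $b>a$ so that $b>N/2$; then the two opposite corners of size $b$ occupy $[a,N]\times[a,N]$ and $[0,b]\times[0,b]$, and these overlap in the nondegenerate square $[a,b]\times[a,b]$, which is impossible in a valid dissection. Either way we reach a contradiction, completing the proof.

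The main obstacle I expect is the structural middle paragraph: one must verify that the three ``sum-to-$N$'' equations really do close up to force the fourth side into the same state, regardless of \emph{which} three sides are corner-only. Cyclic symmetry of the configuration reduces this to a single case, after which the overlap computation in the last step is routine coordinate bookkeeping and raises no further difficulties.
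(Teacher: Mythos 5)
Your proof is correct, and it takes a genuinely different route from the paper's. The paper argues locally: it picks one side touched by only two squares (of necessarily different sizes $a<b$), shows the square below the smaller corner cannot reach the bottom (a size-$b$ square in that position would overlap the existing size-$b$ square), and then chases one more non-corner square onto either the right or the bottom border. You instead argue globally by contradiction: with at most one non-corner border square, at least three sides are ``corner-only,'' the three equations $a+b=b+c=c+d=N$ force $a=c$, $b=d$ and close up the fourth side, and the resulting configuration dies either by triviality (four $N/2$ corners sharing full borders) or by the overlap $[a,b]\times[a,b]$ of the two opposite size-$b$ corners. All the steps check out, including the points that need care: a non-corner border square touches exactly one side, the four corner elements are distinct, and any three of the four sides are consecutive in cyclic order. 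What your version buys is symmetry and explicitness -- it avoids the paper's slightly implicit justification that the bottom-left corner cannot have size $b$, and it surfaces the degenerate $2\times 2$ tiling by $N/2$-squares that the paper dispatches silently via the assumption $a<b$. What the paper's version buys is economy: it reuses the same single overlap observation twice and never needs the algebraic closing-up step. Both proofs deliver exactly the statement needed downstream, namely that some border carries at least three elements.
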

\begin{proof}
Let's start with a border touched by only two squares (which must then have different sizes $a<b$):
    \begin{center}
    \begin{pspicture}(0,0)(9,6)
      \rput(6.5,3.5){b}
      \rput(2,4){a}
      \rput(0.5,1.5){c}
      \rput(8.5,0.5){d}
      \psline(0,0)(0,6)
      \psline(0,6)(9,6)
      \psline(9,6)(9,0)
      \psline(9,1)(4,1)
      \psline(4,1)(4,6)
      \psline(0,2)(4,2)
      \psline[linestyle=dashed](5,0)(5,1)
    \end{pspicture}
    \end{center}
If the square in position $c$ would touch the bottom border, it would be of size $b$, this would lead to an overlap. Thus in position $c$ there has to be a border square which doesn't touch the bottom corner.
Next we can have two cases: If in position $d$ there is also a border square which doesn't touch the bottom border, we're finished. If not, we must have $d=a$ because the whole rectangle is a square (and thus $a+b=b+d$). But then the bottom border can't consist of only two squares (because we have $d=a$ in the right bottom corner, and in the left bottom corner we don't have size $b$), which means we proved the Lemma.
\end{proof}
\begin{lem}\label{least11}Every squared square has at least size 11.\end{lem}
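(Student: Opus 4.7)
The plan is to combine Lemmas \ref{lem1}, \ref{lem2}, and \ref{lem3} by focusing on one carefully chosen side of the squared square. The arithmetic identity $4 + 4 + 3 = 11$ already suggests the right grouping: two corner squares at the ends of a side, plus one non-corner border square strictly between them.

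I would start by applying Lemma \ref{lem3}: at least six squares touch the border. Exactly four of these are corner squares (one per corner), so at least two are non-corner border squares, and each such square lies in the interior of exactly one of the four sides. By the pigeonhole principle, there is at least one side $S$ of the squared square which contains at least one non-corner border square.

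Having fixed such an $S$, I would read off the contributions to its length. Its two endpoints are occupied by (distinct) corner squares of sizes $a, b \geq 4$ by Lemma \ref{lem2}, and strictly between them there sits at least one non-corner border square of size $c \geq 3$ by Lemma \ref{lem1}. Since these three elements occupy disjoint segments of $S$, the side length $s$ of the squared square satisfies
\[
s \;\geq\; a + b + c \;\geq\; 4 + 4 + 3 \;=\; 11,
\]
which is the desired bound.

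The only point requiring any real care is the pigeonhole step, which tacitly uses the fact that the four corners of the squared square are filled by four distinct elements; this is automatic, since any single square touching two corners would span an entire side and hence coincide with the whole squared square, contradicting the presence of two or more elements in the dissection. Past that observation the proof is essentially arithmetic, so I do not expect any genuine obstacle beyond invoking the three preceding lemmata in the right order.
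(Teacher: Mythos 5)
Your proof is correct and follows essentially the same route as the paper: Lemma \ref{lem3} yields a side carrying two corner squares and at least one non-corner border square, and Lemmas \ref{lem1} and \ref{lem2} give the bound $4+3+4=11$. The pigeonhole/distinctness observations you add are sound but are left implicit in the paper's one-line argument.
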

\begin{proof}
From Lemma \ref{lem3} we deduce that on at least one side we have three squares -- two in the corners, one in the middle. Their minimal size is given by Lemma \ref{lem1} and \ref{lem2}, so the minimum size of a squared square is 4+3+4=11.
\end{proof}
\begin{lem}\label{lem11}Besides the squared square shown in Figure 1 (and the mirrored version of it), there is no other squared square of size 11.\end{lem}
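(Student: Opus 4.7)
The plan is to fix the perimeter first and then analyze the remaining interior. By Lemma~\ref{lem3} some side has at least three border squares; any side with four or more would force total length $\geq 4+3+3+4=14$, so each side has exactly two or three border squares, and a three-square side is forced by Lemmas~\ref{lem1}--\ref{lem2} into the pattern $4$-$3$-$4$. Since Lemma~\ref{lem3} demands at least two non-corner border squares and no single side admits more than one, at least two sides are $4$-$3$-$4$. I would then upgrade this to all four sides: if the two $4$-$3$-$4$ sides are opposite, all four corners are already pinned to $4\times 4$, forcing each remaining side to be $4$-$3$-$4$ as well; if the two are adjacent (say top and left), the opposite corner has size $c\in\{4,7\}$, and $c=7$ leaves two $1\times 3$ strips abutting the side-middle $3\times 3$'s that can only be filled by three same-size $1\times 1$'s (trivial), so $c=4$.

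With the outer ring fixed -- four corner $4\times 4$'s and four side-middle $3\times 3$'s -- the remaining region is a plus-shaped set of $21$ unit cells: a central $3\times 3$ block together with four $1\times 3$ arms. I would next show that only $1\times 1$ and $2\times 2$ squares can occupy this region. Any $3\times 3$ either extends into an arm and shares a full edge with a side-middle $3\times 3$, or stays in the central block and forces each $1\times 3$ arm to be three $1\times 1$'s in a row; squares of side $\geq 4$ do not fit geometrically; and any $2\times 2$ lying entirely in the central block blocks every candidate arm-straddling $2\times 2$ for two adjacent arms, again forcing those arms to be trivial. Thus each of the four arms must contain exactly one arm-straddling $2\times 2$, and there are exactly two positions for it per arm.

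This reduces the problem to an enumeration of $2^4=16$ configurations. The four corner cells of the central $3\times 3$ each lie in exactly one candidate $2\times 2$ from each of the two adjacent arms and must be covered exactly once; the four resulting parity constraints pin the choices down to exactly two solutions, which are mirror images of one another. In each, the single uncovered central cell and the four arm-end cells are filled by $1\times 1$ squares, and one checks that no two same-size squares share a full border, so both tilings are non-trivial and together they reproduce Figure~\ref{fig1116} and its mirror. The main obstacle is cleanly dispatching the $7\times 7$-corner sub-case in the outer-ring reduction; the plus-region enumeration itself is a short incidence argument.
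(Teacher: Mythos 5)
Your proof is correct, and while it follows the same skeleton as the paper's --- first force the boundary ring to be $4+3+4$ on all four sides, then analyze the remaining plus-shaped region of $21$ cells --- it executes both halves differently. For the ring, the paper argues locally: the square below a corner $4\times4$ cannot reach the opposite border (a $7\times7$ there would trap two unit squares side by side under the middle $3\times3$), hence each side is $4+3+4$ ``analogously''; you instead count non-corner border squares to get two $4$-$3$-$4$ sides from Lemma~\ref{lem3} and dispatch the $7\times7$-corner case via the two unfillable $1\times3$ strips, which is the same obstruction packaged more globally and arguably more carefully than the paper's ``analogously''. The real divergence is in the center: the paper runs a forced propagation ($d\le 3$, $d=3$ impossible, assume $d=1$ by symmetry, then $e=2$, $c=2$, $g=1$, $h=2$, ``etc.''), whereas you first classify the admissible element sizes in the plus region (only $1\times1$ and $2\times2$, since every $3\times3$ placement is either trivial against a side-middle $3\times3$ or forces an arm of three adjacent unit squares), reduce to one of two straddling $2\times2$'s per arm, and close with an exactly-once covering constraint on the four corner cells of the central block, yielding precisely the two mirror-image solutions. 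Your version buys a transparent, exhaustive count of the solution set (the $2^4\to 2$ reduction makes uniqueness manifest rather than implicit in an ``etc.''); the paper's chain is shorter on the page. One small point to tighten: the ``covered exactly once'' claim needs the ``at least once'' half spelled out --- if a central corner cell is missed by both adjacent arms' $2\times2$'s, it becomes a $1\times1$ sharing a full edge with the leftover $1\times1$ of one of those arms --- but this is a one-line check and does not affect the soundness of the argument.
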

\begin{proof}
Because of Lemma \ref{lem3} there must be one border with at least three elements. The only possible way to dissect a sidelength of 11 into three or more squares is 4+3+4. Thus every squared square of size 11 must contain (at least) one border with 4+3+4:
    \begin{center}
    \begin{pspicture}(0,0)(11,5)
      \rput(10.5,0.5){a}
      \rput(6.5,1.5){b}
      \rput(5.5,1.5){c}
      \psline(0,0)(0,5)
      \psline(0,5)(11,5)
      \psline(11,5)(11,0)
      \psline(0,1)(4,1)
      \psline(4,1)(4,5)
      \psline(4,2)(7,2)
      \psline(7,5)(7,1)
      \psline(7,1)(11,1)
    \end{pspicture}
    \end{center}
If $a$ touched the bottom border, then it would be $a=11-4=7$, but then $b=c=1$, which is not allowed. Thus follows that $a$ doesn't touch the bottom border, and so we must have $a=3$ and a 4x4 square underneath it. Analogously, we deduce that the whole border has to be 4+3+4 on every side:
    \begin{center}
    \begin{pspicture}(0,0)(11,11)
      \rput(4.5,7.5){d}
      \rput(3.5,6.5){e}
      \rput(3.5,5.5){f}
      \rput(5.5,7.5){c}
      \psline(0,0)(0,11)
      \psline(0,11)(11,11)
      \psline(11,11)(11,0)
      \psline(11,0)(0,0)
      \psline(0,4)(4,4)
      \psline(4,4)(4,0)
      \psline(7,0)(7,4)
      \psline(7,4)(11,4)
      \psline(7,7)(11,7)
      \psline(7,7)(7,11)
      \psline(0,7)(4,7)
      \psline(4,7)(4,11)
      \psline(4,8)(7,8)
      \psline(8,4)(8,7)
      \psline(4,3)(7,3)
      \psline(3,4)(3,7)
    \end{pspicture}
    \end{center}
We now have $d\leq 3$. But if $d=3$ we would have $e=f=1$, which is not allowed. So two options remain: $d=1$ or $d=2$.
\begin{itemize}
	\item If $d=1$, then $c=2$, then $e=2$.
	\item If $d=2$, then $e=1$.
\end{itemize}
Because we don't distinguish mirrored versions we can assume that $d=1$ and $e=2$.
The remaining center part:
    \begin{center}
    \begin{pspicture}(0,0)(5,5)
      \rput(1.5,4.5){d}
      \rput(0.5,3.5){e}
      \rput(2.5,4.5){c}
      \rput(4.5,3.5){g}
      \rput(4.5,2.5){h}
      \psline(0,1)(0,4)
      \psline(0,4)(2,4)
      \psline(1,4)(1,5)
      \psline(1,5)(4,5)
      \psline(4,5)(4,4)
      \psline(4,4)(5,4)
      \psline(5,4)(5,1)
      \psline(5,1)(4,1)
      \psline(4,1)(4,0)
      \psline(4,0)(1,0)
      \psline(1,0)(1,1)
      \psline(1,1)(0,1)
      \psline(0,2)(2,2)
      \psline(2,2)(2,5)
    \end{pspicture}
    \end{center}
We deduce: $c=2$, $g=1$, $h=2$, etc. and get the smallest squared square from Figure 1 (page \pageref{fig1116}). Because this construction was the only way (apart from the symmetry in $d$ and $e$), this is the only solution in size 11x11.
\end{proof}

\section{Size 12x12 to 15x15}
\label{Sect3}
\begin{lem}\label{no12}There is no squared square with size 12x12.\end{lem}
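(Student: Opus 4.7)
The plan is a case analysis driven by the three-element decomposition of one border.

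By Lemma \ref{lem3} at least one side of the dissection carries three or more border squares. A side with four or more squares would already have length $\ge 4+3+3+4=14$ by Lemmas \ref{lem1} and \ref{lem2}, so that side has exactly three squares $a+b+c=12$ with $a,c\ge 4$ and $b\ge 3$. The only triples are $(4,3,5)$, $(5,3,4)$, and $(4,4,4)$; the last is killed on the spot because the middle $4$ would share its entire left (or right) edge with a corner $4$. Up to reflection the bottom reads $4+3+5$, and I place coordinates so that the $4$-corner is $[0,4]^2$, the middle $3$ is $[4,7]\times[0,3]$, and the $5$-corner is $[7,12]\times[0,5]$.

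Next I analyse the bottom row of the strip above the middle $3$: every point of the interval $y=3$, $4\le x\le 7$ must lie on the bottom edge of some dissection square, and the widths of those squares partition $3$ as $3$, $1{+}2$, $2{+}1$, or $1{+}1{+}1$. The partitions $3$ and $1{+}1{+}1$ are immediately trivial. In the configuration $2{+}1$ (a $2{\times}2$ at $[4,6]\times[3,5]$ and a unit at $[6,7]\times[3,4]$) the cell $[6,7]\times[4,5]$ is enclosed on three sides by the $2{\times}2$, the $5$-corner, and the unit below, so only a unit fits and it shares a full edge with the unit beneath --- trivial. This leaves the subcase $1{+}2$: a unit at $[4,5]\times[3,4]$ and a $2{\times}2$ at $[5,7]\times[3,5]$, which create an inner L-corner at $(5,5)$ in the upper region.

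The square covering the lower-left quadrant of that inner corner must have right edge $x=5$ and bottom $y=4$ (otherwise it overlaps the $2{\times}2$ or a lower element), so it has the form $[5-s,5]\times[4,4+s]$ with $s\in\{1,\dots,5\}$. I would then eliminate each $s$ separately: $s=1$ is trivial against the unit below; $s\in\{3,4\}$ leaves width $<3$ along the bottom of the left border, in conflict with Lemma \ref{lem1}; $s=5$ uses the left border as $4+5$, leaving only $3$ for a top-left corner, in conflict with Lemma \ref{lem2}; and $s=2$ forces the left border to be $4{+}3{+}5$ and leaves a $2\times 1$ strip $[3,5]\times[6,7]$ wedged between the placed squares, which admits no valid tiling (two adjacent units are trivial, and every larger square overlaps a neighbour). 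The main obstacle is the $s=2$ branch, where the forced placements along the left border must be chased all the way up to the top-left corner before the unavoidable $2\times 1$ obstruction is exposed; the other cases fall directly to the border lemmas.
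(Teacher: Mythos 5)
Your argument is correct and takes essentially the same route as the paper: both reduce via Lemmas \ref{lem1}--\ref{lem3} to the unique three-element border $4+3+5$ and then eliminate forced placements, and your decisive sub-case $s=2$ reproduces exactly the paper's chain (the unit and the $2\times 2$ capping the middle $3\times 3$, the $2\times 2$ at the inner corner, the forced $4+3+5$ on the adjacent border, and the final $2\times 1$ strip that only two adjacent unit squares could fill). The difference is purely organizational: the paper first fixes all four borders of the $12\times 12$ square and then derives the contradiction in the interior, whereas you work locally above the middle $3\times 3$ and only develop the left border in the one sub-case that requires it.
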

\begin{proof}
After Lemma 3, there must be one border with at least 3 elements. These have to be at least 4+3+4, where one of them is increased by 1. The center one can't be increased (4+4+4 is not allowed), so we have 4+3+5 (or, symmetrically, 5+3+4) as the only starting possibility:
    \begin{center}
    \begin{pspicture}(0,-1)(12,7)
      \rput(0.5,2.5){a}
      \rput(4.5,3.5){b}
      \rput(5.5,3.5){c}
      \rput(11.5,1.5){d}
      \psline(0,3)(0,7)
      \psline(0,7)(12,7)
      \psline(12,7)(12,2)
      \psline(12,2)(7,2)
      \psline(7,2)(7,7)
      \psline(7,4)(4,4)
      \psline(4,3)(4,7)
      \psline(0,3)(4,3)
      \psline[linestyle=dashed](0,-1)(0,3)
      \psline[linestyle=dashed](0,0)(5,0)
      \psline[linestyle=dashed](3,0)(3,3)
      \psline[linestyle=dashed](5,-1)(5,0)
    \end{pspicture}
    \end{center}
Two cases: Either $a$ reaches the bottom with $a=12-4=8$, or it doesn't (then $a=3$, with a 5x5 below). But $a=8$ is impossible, it would overlap the existing 5x5 square. Thus it has to be $a=3$ with a 5x5 below (see dashed lines). Then we have two cases for $d$: either $d$ reaches the bottom with $d=12-5=7$, or it doesn't (then $d=3$ with a 4x4 below). But if $d=7$ then $c=2$ and $b=1$ and the rest is a rectangle with size $2x3$ and can't be filled properly. Thus it has to be $d=3$ with a 4x4 below:
    \begin{center}
    \begin{pspicture}(0,-5)(12,7)
      \rput(4.5,3.5){b}
      \rput(5.5,3.5){c}
      \rput(6.5,3.5){e}
      \rput(6.5,2.5){f}
      \rput(3.5,2.5){g}
      \rput(3.5,1.5){h}
      \rput(3.5,0.5){i}
      \rput(4.5,0.5){j}
      \psline(0,-5)(0,7)
      \psline(0,7)(12,7)
      \psline(12,7)(12,-5)
      \psline(12,-5)(0,-5)
      \psline(12,2)(7,2)
      \psline(7,2)(7,7)
      \psline(7,4)(4,4)
      \psline(4,3)(4,7)
      \psline(0,3)(4,3)
      \psline(0,0)(5,0)
      \psline(3,0)(3,3)
      \psline(5,-5)(5,0)
      \psline(5,-2)(8,-2)
      \psline(8,-5)(8,-1)
      \psline(8,-1)(12,-1)
      \psline(9,-1)(9,2)
    \end{pspicture}
    \end{center}
The options for $b\leq 3$ are: If $b=2$ then $e=f=1$ which is not allowed. If $b=3$ then $g=h=1$ which is not allowed. Thus it has to be $b=1$ and thus $cef=2$ and thus $gh=2$. This leads to $i=j=1$, which is not allowed.
\end{proof}

\begin{lem}\label{lemx3y}It is not possible to decompose a border with $x+3+y$ where $x\geq 5$ and $y\geq 5$.\end{lem}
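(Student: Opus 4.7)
The plan is to analyze what must sit directly below the central $3\times 3$ square along the border in question.

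First I would set up coordinates, orienting the decomposition so that the border of length $x+3+y$ runs along the top, with the $x\times x$ square in the upper-left corner, the $3\times 3$ square in the middle, and the $y\times y$ square in the upper-right corner. Because $x\geq 5$ and $y\geq 5$, both corner squares extend at least two rows below the bottom edge of the $3\times 3$. Consequently, any square whose top edge lies along the bottom of the $3\times 3$ is trapped horizontally in a strip of width $3$ between those two corner squares; its side is at most $3$, and side $3$ is ruled out because it would share a full border with the $3\times 3$ above. Hence every square adjacent to the bottom of the $3\times 3$ has side $1$ or $2$.

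Next I would enumerate the arrangements of those adjacent squares. Their sides sum to $3$, so the only possibilities are $1{+}1{+}1$, $1{+}2$, or $2{+}1$. The first is forbidden because two neighboring $1\times 1$ squares share a full border. Up to reflection we may therefore assume a $2\times 2$ lies directly below on the left and a $1\times 1$ lies directly below on the right, with the $1\times 1$ abutting the $y\times y$ corner square.

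The key step is then to look at whatever square fills the cell directly below this $1\times 1$. Its top edge sits one row below the bottom of the $3\times 3$. On its right, the $y\times y$ square (with $y\geq 5$) still blocks the vertical line separating the $3\times 3$ from the $y\times y$; on its left, the $2\times 2$ (which itself extends one row below the $1\times 1$) still blocks its left side. Therefore this new square is confined to a strip of width $1$ and must be a $1\times 1$, which then shares a full border with the $1\times 1$ directly above it, contradicting the nontriviality condition. The mirror arrangement $1{+}2$ is handled identically using $x\geq 5$.

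The only real obstacle to watch for is checking that no larger square can ``escape'' the strip by slipping past the $2\times 2$ or the corner square; the hypotheses $x,y\geq 5$ are used precisely to guarantee that both blockers reach far enough down (at least to two rows below the $3\times 3$) so that no such wider square fits.
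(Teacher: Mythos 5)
Your proof is correct and follows essentially the same route as the paper: both arguments confine the squares below the central $3\times 3$ to a width-$3$ strip, exclude side $3$ by the nontriviality condition, and derive a forced stacking of two $1\times1$ squares. Your enumeration by partitions of $3$ into parts $1$ and $2$ is just a reorganization of the paper's case analysis on the leftmost square $a\in\{1,2,3\}$.
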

\begin{proof}
We would have this situation:
    \begin{center}
    \begin{pspicture}(0,0)(7,6)
      \rput(2.5,2.5){a}
      \rput(3.5,2.5){b}
      \rput(4.5,2.5){c}
      \rput(2.5,1.5){d}
      \rput(3.5,1.5){e}
      \rput(4.5,1.5){f}
      \psline(2,1)(2,6)
      \psline(5,1)(5,6)
      \psline(2,3)(5,3)
      \psline(1,6)(6,6)
      \psline[linestyle=dashed](0,6)(1,6)
      \psline[linestyle=dashed](6,6)(7,6)
    \end{pspicture}
    \end{center}
$a=3$ is not allowed (top neighbor). If $a=2$ then $c=f=1$ which is ot allowed. If $a=1$ then $b=2$ and thus $d=1$ which is not allowed. So this gap can't be filled.
\end{proof}
\begin{lem}\label{no13}There is no squared square with size 13x13.\end{lem}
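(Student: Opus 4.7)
The plan is to enumerate all admissible top-border decompositions of length $13$ and rule each out by the inner-propagation strategy used in Lemma \ref{no12}. By Lemma \ref{lem3}, some side has at least three border squares; the two corners have size $\geq 4$ (Lemma \ref{lem2}) and any middle square has size $\geq 3$ (Lemma \ref{lem1}). Since $4+3+3+4=14>13$, the decomposition has exactly three parts, and the triples $a+b+c=13$ with $a,c\geq 4$ and $b\geq 3$ are, up to reflection, $4+5+4$, $4+4+5$, $4+3+6$ and $5+3+5$. The last is excluded directly by Lemma \ref{lemx3y}.

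For each of the three remaining triples I would fix the forced top border and examine the square sitting on the perpendicular side just below each corner. As in Lemma \ref{no12}, that square either reaches the opposite border, in which case its forced side length is compared with the already-placed squares on the far side to find an overlap or a forbidden same-size neighbor, or else it is strictly smaller and a further corner square is forced beneath it, so the analysis descends by one layer. After a few layers the remaining interior is narrow enough that Lemmata \ref{lem1} and \ref{lem2}, together with the no-equal-neighbor rule, force a pair of adjacent $1\times 1$ squares, exactly as in the termination of Lemmata \ref{lem11} and \ref{no12}.

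The main obstacle will be the branching in the $4+3+6$ case: the size-$6$ corner leaves a comparatively wide residual strip, and the possible sizes of the next border square beneath it proliferate into several sub-cases, each of which must be chased separately for two or three layers. The $4+4+5$ and $4+5+4$ cases are more symmetric, and I expect them to collapse quickly since a $5\times 5$ stacked below the corner $4\times 4$ would geometrically overlap the top-central square when it is itself a $5\times 5$. The bulk of the work is therefore the careful bookkeeping of dashed auxiliary lines in the $4+3+6$ branch, mirroring the structure of the proof of Lemma \ref{no12}.
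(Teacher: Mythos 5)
Your setup matches the paper's: Lemma \ref{lem3} forces a side with at least three border squares, four squares cannot fit on a 13-border, and the candidate triples are exactly the ones you list, with $5+3+5$ killed by Lemma \ref{lemx3y}. But two things keep this from being a proof. First, $4+4+5$ should not survive to the case analysis at all: the corner $4\times 4$ and the adjacent middle $4\times 4$ share a full border, so this decomposition is trivial and excluded by definition --- there is nothing geometric to check. Second, and more seriously, for the two genuine cases ($4+5+4$ and $4+3+6$) you describe what you \emph{would} do and what you \emph{expect} to happen, but you never carry out the propagation; the entire content of the lemma lies in that case-chasing, so as written there is a gap exactly where the work is.

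Moreover, your anticipated termination pattern (``a pair of adjacent $1\times 1$ squares'') is not how the $4+5+4$ case actually dies. There, the square below each corner $4\times 4$ is forced to be a $3\times 3$ (a $5\times 5$ or larger overlaps the central square or reaches the bottom and overlaps, and $4\times 4$ is a same-size neighbor), which forces a $6\times 6$ in each bottom corner, and the contradiction is a $1\times 6$ strip between the two $6\times 6$'s that cannot be tiled --- an unfillable gap, not a forced pair of unit squares. Only the $4+3+6$ case ends the way you predict: the left border is forced to be $4+3+6$ as well, and a short propagation from the square under the central $3\times 3$ ($b=1$, then $2$'s, then two adjacent $1$'s) gives the contradiction. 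So the strategy is sound and is the paper's own, but you would need to actually run both branches, and the $4+5+4$ branch requires a different closing argument from the one you sketch.
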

\begin{proof}
We start again with one of the borders consisting of at least three elements (such a border exists according to Lemma 3).
The smallest sidelength for which a border with four elements is possible is 4+3+4+5=16.
Thus in every squared square with size 13x13 there exists at least one border with exactly three elements.
For this border we have several possibilities:
If one of the corners in this border has size 4, then we have 9 left for the other two. This leads to 4+3+6 or 4+4+5 or 4+5+4, but 4+4+5 is not allowed.
If both corners are larger than 4, we have just one possibility, 5+3+5. But because of Lemma \ref{lemx3y} this doesn't make a square.
So we have two options left:
\begin{enumerate}
	\item start with 4+5+4
	\item start with 4+3+6
\end{enumerate}
Try to start with 4+5+4 first:
    \begin{center}
    \begin{pspicture}(0,0)(13,7)
      \rput(0.5,2.5){a}
      \rput(3.5,2.5){b}
      \rput(9.5,2.5){b}
      \rput(12.5,2.5){a}
      \psline(0,3)(0,7)
      \psline(0,7)(13,7)
      \psline(13,7)(13,3)
      \psline(13,3)(9,3)
			\psline(9,2)(9,7)
			\psline(9,2)(4,2)
			\psline(4,2)(4,7)
			\psline(0,3)(4,3)
			\psline[linestyle=dashed](0,3)(0,0)
			\psline[linestyle=dashed](0,0)(3,0)
			\psline[linestyle=dashed](3,0)(3,3)
			\psline[linestyle=dashed](3,2)(4,2)
			\psline[linestyle=dashed](9,2)(10,2)
			\psline[linestyle=dashed](10,3)(10,0)
			\psline[linestyle=dashed](10,0)(13,0)
			\psline[linestyle=dashed](13,3)(13,0)
		\end{pspicture}
    \end{center}
In positions $a$ we must have $a=3$, because $a=4$ is not allowed and border elements must have at least size 3. Underneath the $a$ squares, there is a gap of 6 left, which can't be dissected into more than one square, so in both lower corners have to be 6x6 squares. But between these is a gap with size 1x6 which can't be filled. This means that the dissection 4+5+4 isn't part of any squared square.

Next we try to start with 4+3+6:
    \begin{center}
    \begin{pspicture}(0,0)(13,8)
      \rput(0.5,3.5){a}
      \rput(4.5,4.5){b}
      \rput(6.5,4.5){c}
      \rput(6.5,3.5){d}
			\rput(4.5,3.5){e}
			\rput(3.5,1.5){f}
			\rput(4.5,1.5){g}
			\psline(0,4)(0,8)
      \psline(0,8)(13,8)
      \psline(13,8)(13,2)
      \psline(13,2)(7,2)
			\psline(7,2)(7,8)
			\psline(7,5)(4,5)
			\psline(4,8)(4,4)
			\psline(4,4)(0,4)
			\psline[linestyle=dashed](0,4)(0,0)
			\psline[linestyle=dashed](0,1)(6,1)
			\psline[linestyle=dashed](3,1)(3,4)
			\psline[linestyle=dashed](3,2)(5,2)
			\psline[linestyle=dashed](5,2)(5,5)
			\psline[linestyle=dashed](4,4)(5,4)
			\psline[linestyle=dashed](5,3)(7,3)
			\psline[linestyle=dashed](6,1)(6,0)
		\end{pspicture}
    \end{center}
Then on the left border we have $13-4=9$ left, this can't be done with one square (because it would overlap with the existing 6x6 square), so we have to do it with two squares. As already shown, 4+5+4 is not possible, so we must have $a=3$ with a 6x6 below (dashed lines).
Next we observe that $b\leq 3$, but $b=3$ is not allowed and if $b=2$ then $c=d=1$ which is not allowed. Thus we have $b=1$ and $cd=2$, thus $e=2$ and $f=g=1$, which is not allowed. This means that 4+3+6 doesn't lead to a squared square either.
\end{proof}

\begin{lem}\label{lem64}If a 4x4 square is in the corner, then no neighbor can have size 6x6 or bigger.\end{lem}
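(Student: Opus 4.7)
Place the $4\times 4$ square in the bottom-left corner, occupying $[0,4]\times[0,4]$. Its two border-neighbors lie along the bottom edge and along the left edge of the big square; by mirror symmetry it suffices to rule out a right-side neighbor $R$ of size $s\ge 6$. Since $R$ shares (part of) the segment $\{4\}\times[0,4]$ with the $4\times 4$, its left edge must be a segment $\{4\}\times[y_0,y_0+s]$ with $y_0\in[0,4)$ and $y_0+s\ge 6$; in particular $R$ blocks the vertical line $x=4$ throughout $y\in[4,6]$.

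My first step is to identify the square $S$ directly above the $4\times 4$ whose bottom-left corner is $(0,4)$, and pin down its size $b$. Lemma \ref{lem1} forces $b\ge 3$ since $S$ touches the left border; $b=4$ would make $S$ and the $4\times 4$ two same-size neighbors sharing a full edge, contradicting non-triviality; and $b\ge 5$ would push the interior of $S$ into $R$ near the point $(4+\varepsilon,4+\varepsilon)$. Hence $b=3$, so $S=[0,3]\times[4,7]$.

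My second step is to look at the two squares $T_1$ and $T_2$ whose bottom-left corners are $(3,4)$ and $(3,5)$. For each of them the left edge is clamped at $x=3$ by $S$ (which extends up to $y=7$) and the right edge is clamped at $x=4$ by $R$ (which blocks this line throughout $y\in[4,6]$), so each has width at most $1$. Both are therefore $1\times 1$ squares, namely $[3,4]\times[4,5]$ and $[3,4]\times[5,6]$. But then $T_1$ and $T_2$ are two unit squares sharing a full edge, contradicting non-triviality.

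The step requiring the most care is establishing that $R$ blocks $\{4\}\times[4,6]$ even when $R$ is not the bottom-most right-side neighbor of the $4\times 4$ (there could be a shorter square sandwiched between $R$ and the bottom border); this is precisely where the hypothesis $s\ge 6$ is used — for $s=5$ the argument would fail, since $R$ need not block $x=4$ up to $y=6$. Everything else is a short routine case-split, and the symmetric situation of a top-side neighbor of size $\ge 6$ is handled by swapping the roles of the two coordinates.
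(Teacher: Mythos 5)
Your proof is correct and follows essentially the same route as the paper: your square $S$ (forced to be $3\times 3$ by Lemma~\ref{lem1}, non-triviality, and overlap with the big neighbor) is the paper's square $a$, and your unit squares $T_1,T_2$ trapped in the width-$1$ gap are the paper's $b$ and $c$. You merely spell out more explicitly why the big neighbor blocks the line $x=4$ up to height $6$, which is the step the paper leaves implicit.
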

\begin{proof}
If we had such a situation, then we would have $a\geq 3$ because it is at the border:
    \begin{center}
    \begin{pspicture}(0,-1)(10,7)
      \rput(0.5,2.5){a}
			\rput(5.5,5.5){$\geq 6$}
      \rput(3.5,2.5){b}
      \rput(3.5,1.5){c}
      \psline(0,3)(0,7)
      \psline(0,7)(10,7)
			\psline(0,3)(4,3)
      \psline(4,7)(4,1)
      \psline(0,3)(0,-1)
			\psline[linestyle=dashed](0,0)(3,0)
			\psline[linestyle=dashed](3,0)(3,3)
		\end{pspicture}
    \end{center}
But $a=4$ is not allowed, thus follows $a=3$. But then $b=c=1$ which is not allowed.
\end{proof}

\begin{lem}\label{no14}There is no squared square with size 14x14.\end{lem}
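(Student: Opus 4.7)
The strategy parallels the proofs of Lemmas \ref{no12} and \ref{no13}: enumerate every way to dissect one border of an alleged $14\times14$ squared square and refute each. By Lemma \ref{lem3} combined with the fact that a four-element border requires sidelength at least $4+3+4+5=16$, every border has at most three elements and at least one (say the top) has exactly three. Writing this top as $c_1+m+c_2=14$ with $c_1,c_2\geq 4$ (Lemma \ref{lem2}), $m\geq 3$ (Lemma \ref{lem1}), $c_1\neq m$ and $c_2\neq m$ (non-triviality), and, when $m=3$, $\min(c_1,c_2)=4$ (Lemma \ref{lemx3y}), the only surviving options up to mirror symmetry are $4+3+7$, $4+6+4$ and $5+4+5$.

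The case $4+6+4$ collapses at once: each $4\times 4$ corner has the central $6\times 6$ as a border-neighbor of size $\geq 6$, contradicting Lemma \ref{lem64}. For $4+3+7$ I would apply Lemma \ref{lem64} to the $4\times 4$ corner to bound the square just below it on the left border: that square has size $\leq 5$ by the lemma, $\geq 3$ by Lemma \ref{lem1}, and $\neq 4$ by non-triviality, so it equals $3$ or $5$. Together with the dual constraint on the right border -- the square just below the $7\times 7$ corner lies in $\{4,5,6\}$, being $\geq 4$ by Lemma \ref{lem2} and $\neq 7$ by non-triviality -- each choice forces a short cascade of inner tiles that terminates in one of the familiar contradictions: a corner of size below $4$, a width-$1$ or width-$2$ dead-end strip, or a fresh violation of Lemma \ref{lem64} on a newly-formed $4\times 4$ corner.

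The remaining case $5+4+5$ is where no prior lemma gives a direct knockout, because the central $4\times 4$ is not in a corner. Here I would inspect the concave corner at the bottom-left of this central $4\times 4$: the tile filling it has its top-left vertex there and some size $s$, with $s=4$ ruled out by non-triviality (it would share a full edge with the $4\times 4$ above) and $s>4$ ruled out geometrically (it would overlap the right-hand $5\times 5$ corner), leaving $s\in\{1,2,3\}$. For each of these three values -- and, by the left-right symmetry of the case, simultaneously for the analogous corner on the other side -- the forced inner tiles in the $4$-wide strip wedged between the central $4\times 4$ and the tops of the two lower regions quickly produce a $1$-wide vertical strip descending to the bottom border, which can only be tiled by stacked unit squares sharing full common edges, the forbidden trivial configuration. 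The main obstacle is precisely this final case: no single existing lemma removes it and the proof must trace the pinched small-square cascade carefully, although each branch collapses once the narrow strip is exposed.
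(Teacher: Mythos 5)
Your enumeration of the candidate borders ($4{+}3{+}7$, $4{+}6{+}4$, $5{+}4{+}5$ up to mirroring) and your disposal of $4{+}6{+}4$ via Lemma \ref{lem64} match the paper exactly. The two remaining cases, however, are not actually proved: in both you announce that "a short cascade of forced tiles" ends in "one of the familiar contradictions" without exhibiting the cascade, and the constraints you do state contain errors. For $4{+}3{+}7$, you claim the square below the $7\times 7$ corner is $\geq 4$ by Lemma \ref{lem2}; that lemma applies only to squares sitting in a corner of the dissection, which this square is not (it is $\geq 3$ by Lemma \ref{lem1}, and in fact the right border is forced to be exactly $7{+}3{+}4$, since the remaining $7$ must split into at least two squares with the bottom one $\geq 4$). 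The paper's argument here is much shorter and you should compare it: the left border is likewise forced to be $4{+}3{+}7$, and then the two $7\times7$ squares wall off a $7\times 7$ region in the top-left quarter that would itself have to be a squared square of size $7$, contradicting Lemma \ref{least11}. No tile-by-tile cascade is needed.

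For $5{+}4{+}5$ the gap is more serious. You jump straight to the concave corner under the central $4\times4$ and obtain only $s\in\{1,2,3\}$ there, whereas the paper first pins down the global structure: the square below each corner $5\times5$ cannot reach the bottom (a $9\times9$ there would leave a $4\times1$ strip under the central $4\times4$ fillable only by four unit squares), so \emph{all four} borders are $5{+}4{+}5$. That information is what sharpens the bound to $b\leq 2$ at your concave corner (if $b=3$ it pinches the width-$1$ column between the left-border $4\times4$ and the interior into stacked unit squares) and drives the final contradiction, which occurs entirely in the small region near the top center -- not, as you assert, in "a $1$-wide vertical strip descending to the bottom border." As written, your case $5{+}4{+}5$ is a plan rather than a proof, and the mechanism you predict for its collapse is not the one that actually occurs.
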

\begin{proof}
We start again with one of the borders consisting of three elements (such a border exists according to Lemma 3).
First we list all decompositions of a 14-border with 3 or more elements:
\begin{itemize}
	\item corner: 4, rest of border: 10 $\Rightarrow$ 4+3+7 or 4+6+4
	\item corner: 5, rest of border: 9 $\Rightarrow$ 5+3+6 or 5+4+5
	\item corner: 6, rest of border: 8 $\Rightarrow$ 6+3+5
	\item corner: 7, rest of border: 7 $\Rightarrow$ 7+3+4
\end{itemize}
But (because of Lemma \ref{lem64}) 4+6+4 is impossible and (because of Lemma \ref{lemx3y}) 5+3+6 is impossible. So (because of symmetry) we have two options left: 4+3+7 and 5+4+5.

First we try to start with 4+3+7:
	    \begin{center}
			\begin{pspicture}(0,0)(14,8)
				\rput(0.5,3.5){a}
				\psline(0,4)(0,8)
				\psline(0,8)(14,8)
				\psline(14,8)(14,1)
				\psline(14,1)(7,1)
				\psline(7,1)(7,8)
				\psline(7,5)(4,5)
				\psline(4,4)(4,8)
				\psline(4,4)(0,4)
				\psline[linestyle=dashed](0,4)(0,0)
				\psline[linestyle=dashed](3,1)(3,4)
				\psline[linestyle=dashed](0,1)(7,1)
				\psline[linestyle=dashed](7,0)(7,1)
			\end{pspicture}
			\end{center}
     On the left border, the square starting in position $a$ can't reach the bottom ($a\neq 10$), because it would overlap the existing 7x7 square. As we have only two possible decompositions of the 14-border into at least three elements (see above) and because 5+4+5 doesn't fit here (we have 4x4 in the corner already), the only possible decompostion of the left border is 4+3+7 (dashed lines). But then we would get a 7x7 substructure in the top/left quarter of the dissection, and in Lemma \ref{least11} we showed that such doesn't exist. Thus 4+3+7 doesn't lead to a squared square.

This leaves the last decomposition, 5+4+5:
	    \begin{center}
			\begin{pspicture}(0,0)(14,14)
				\rput(0.5,8.5){a}
				\rput(5.5,9.5){b}
				\rput(6.5,9.5){c}
				\rput(4.5,8.5){d}
				\rput(4.5,7.5){e}
				\rput(7.5,9.5){f}
				\rput(8.5,9.5){g}
				\rput(5.5,8.5){h}				
				\psline(0,0)(0,14)
				\psline(0,14)(14,14)
				\psline(14,14)(14,0)
				\psline(14,0)(0,0)
				\psline(0,5)(5,5)
				\psline(5,5)(5,0)
				\psline(5,4)(9,4)
				\psline(9,0)(9,5)
				\psline(9,5)(14,5)
				\psline(10,5)(10,9)
				\psline(9,9)(14,9)
				\psline(9,9)(9,14)
				\psline(9,10)(5,10)
				\psline(5,9)(5,14)
				\psline(5,9)(0,9)
				\psline(4,5)(4,9)
				\psline[linestyle=dashed](4,8)(5,8)
				\psline[linestyle=dashed](5,8)(5,9)
				\psline[linestyle=dashed](5,9)(6,9)
				\psline[linestyle=dashed](6,10)(6,8)
				\psline[linestyle=dashed](8,8)(6,8)
				\psline[linestyle=dashed](8,8)(8,10)
				\psline[linestyle=dashed](8,9)(10,9)
			\end{pspicture}
			\end{center}
Here $a$ can't reach the bottom because then we'd have $b=c=1$. And because 5+4+5 is the only remaining decomposition with more than two elements, the square must be surrounded by 5+4+5 on each side (see above). Now we must have $b\leq 2$, otherwise $d=e=1$. But if $b=2$ then $f=1$ and $g$ can't be filled. So we must have $b=1$. A symmetrical argument leads to $d=1$ and $g=1$, so $cf=2$. But now $h$ can't be filled.
\end{proof}

\begin{lem}\label{lem43x}It is not possible to complete a border decomposed $4+3+x$ where $x\geq 6$ to a full squared square.\end{lem}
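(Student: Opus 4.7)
The plan is to analyze the fill immediately below the central $3\times 3$ square of the border $4+3+x$ and show that every possibility forces a pair of equal-sized squares sharing a full border. Write the top border as $A+B+C$ with $A$ of size $4$ in the left corner, $B$ of size $3$ in the middle, and $C$ of size $x\geq 6$ in the right corner, and let $b$ denote the square whose top-left corner coincides with the bottom-left corner of $B$.

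The first step is to bound $b\leq 3$: because $x\geq 6$, the left edge of $C$ extends downward well past the bottom of $B$, so $b$'s right edge cannot cross column $4+3=7$ without overlapping $C$. Since $b=3$ would share a full border with $B$, we have $b\in\{1,2\}$. If $b=2$, the remaining gap just below $B$ is filled by a square $c$ whose width is at most $1$ and which cannot equal $2$ (left neighbor $b$) or $3$ (top neighbor $B$), forcing $c=1$; the same pinch between $C$ on the right and $b$ on the left then forces the square directly below $c$ to be $1\times 1$ as well, yielding two adjacent unit squares with a full common edge, which is not allowed.

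The remaining case $b=1$ forces $c=2$ by identical reasoning, and the main task is to rule out every way of filling the region directly under $c$. I would enumerate three possibilities: the tile below $c$ has width $2$ (then it is the same size as $c$ and shares its full bottom edge), or $c$'s bottom of length $2$ is split into two unit tiles (adjacent unit squares), or a single tile of width $k\geq 3$ covers $c$'s bottom and extends leftward past column~$5$. In the last subcase a strip of height $1$ and width $k-2$ remains between this tile and the bottoms of $A$ and $b$; it must itself be tiled by unit squares, and either one of them sits directly below the size-$1$ square $b$ (forming a trivial pair with $b$) or two lie side by side in the strip (a trivial pair there).

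I expect the enumeration in the $b=1$ case to be the main delicate step, because a naive version overlooks the possibility of a wide tile slipping beneath $A$. The decisive observation making the argument close in every such subcase is that the leftover strip has height exactly $1$, forcing every square tiling it to be $1\times 1$ and consequently to form a trivial pair either with $b$ or with its neighbor inside the strip.
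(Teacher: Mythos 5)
Your argument is sound in its key ideas and reaches the contradiction by a genuinely different route from the paper's. After forcing $b=1$ and $c=2$ (the paper's $a=1$, $b=2$), the paper pivots to the \emph{left} border: it cases on the square $e$ directly below the corner $4\times4$, argues $e\in\{3,5\}$ (since $e\neq 4$ and $e\geq 6$ would overlap the $2\times2$), and derives the contradiction from squares further inside ($f$, $g$, $h$). You instead stay entirely in the pocket below $B$ and $c$ and never need the left border at all; your closing observation --- that any square meeting the height-$1$ region pinched between the bottoms of $A$ and $b$ and the top of a wide tile must itself be a unit square, hence forms a trivial pair either with $b$ or with its neighbour in the strip --- is correct and makes the endgame more local and self-contained than the paper's. (Incidentally, your disposal of $b=3$ as an immediate trivial pair with $B$ is cleaner than the paper's detour through $e=3$ for that case.)

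There is, however, a hole in your three-way enumeration of how $c$'s bottom edge is covered. Put the top border at $y=0$ with $y$ increasing downward, so $c$ occupies columns $5$--$7$, rows $3$--$5$. The square $T$ with top-right corner at $(7,5)$ is pinned against $C$; you treat the cases where $T$ has width $2$, width $\geq 3$, or width $1$ \emph{with a second unit square beside it}. Missing is the configuration where $T$ has width $1$ but the square $S$ with top-right corner at $(6,5)$ has width $w\geq 2$: then no single tile covers $c$'s bottom, and the two tiles covering it are not both units, so none of your listed cases applies. Fortunately this subcase falls to exactly the strip observation you already state: $S$'s top edge at $y=5$ runs from column $6-w\leq 4$ to column $6$, leaving a height-$1$ strip of width $w-1\geq 1$ below the bottoms of $A$ and $b$ and to the left of $c$; it is tiled by unit squares which are trivial either with each other (if $w\geq 3$) or with the unit square $b$ directly above (if $w=2$). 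Add this subcase explicitly and your proof is complete.
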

\begin{proof}
We would have a situation like this::
	    \begin{center}
			\begin{pspicture}(0,2)(13,9)
				\rput(4.5,5.5){a}
				\rput(5.5,5.5){b}
				\rput(6.5,5.5){c}
				\rput(6.5,4.5){d}
				\rput(0.5,4.5){e}
				\rput(4.5,4.5){f}
				\rput(5.5,3.5){g}
				\rput(6.5,3.5){h}
				\pspolygon(0,9)(7,9)(7,6)(4,6)(4,5)(0,5)(0,9)
				\psline(4,6)(4,9)
				\psline(7,6)(7,3)
				\psline(7,9)(13,9)
				\psline[linestyle=dashed](0,2)(0,5)
				\psline[linestyle=dashed](0,2)(3,2)
				\psline[linestyle=dashed](3,2)(3,5)
				\psline[linestyle=dashed](4,5)(5,5)
				\psline[linestyle=dashed](5,6)(5,4)
				\psline[linestyle=dashed](5,4)(7,4)
			\end{pspicture}
			\end{center}
First we observe that $e\neq 4$.
If we had $a=2$ then $c=d=1$. If we had $a=3$ then $e=3$ (dashed) and the gap between $a$ and $e$ couldn't be filled. Thus we have $a=1$ and $b=2$ (dashed lines).
Now we have two possibilities: Either $e=3$ (but then $f=2$, $g=1$, and $h$ can't be filled) or $e=5$ (but then also $g=1$ and $h$ can't be filled).\end{proof}

\begin{lem}\label{no15}There is no squared square with size 15x15.\end{lem}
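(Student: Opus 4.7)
The plan is to follow the pattern of Lemmas~\ref{no12}, \ref{no13} and~\ref{no14}: enumerate the allowed $3$-element decompositions of a border of length~$15$, eliminate as many as possible using the earlier structural lemmas, and rule out the survivors by tracking forced placements until an impossibility appears.

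By Lemma~\ref{lem3} some border has at least three elements, and since any $4$-element border requires length $\geq 4+3+4+5=16$, that border has exactly three. Writing it $c_1+m+c_2$ with $c_1,c_2\geq4$ (Lemma~\ref{lem2}), $m\geq 3$ (Lemma~\ref{lem1}), and no two adjacent equals, the candidates up to reflection are
\[
4+3+8,\ 4+5+6,\ 4+6+5,\ 4+7+4,\ 5+3+7,\ 5+4+6,\ 6+3+6.
\]
Lemma~\ref{lemx3y} eliminates $5+3+7$ and $6+3+6$; Lemma~\ref{lem64} eliminates $4+6+5$ and $4+7+4$ (the $4\times 4$ corner would meet a neighbor of size $\geq 6$); and Lemma~\ref{lem43x} eliminates $4+3+8$. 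Only $4+5+6$ and $5+4+6$ survive.

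For the $4+5+6$ case, I would first look at the square $a$ on the left border directly below the $4\times 4$ corner: $a=4$ is forbidden by adjacency, $a\geq 3$ by Lemma~\ref{lem1}, $a\leq 5$ by Lemma~\ref{lem64}, and $a=5$ overlaps the adjacent $5\times 5$, forcing $a=3$. The remaining $8$ units of the left border below $a$ admit no valid $2$-element split (the only possibility would be $4+4$, which is trivial), so they are filled by a single $8\times 8$ in the bottom-left corner. The right border below the top $6\times 6$ then has to split into exactly two elements summing to $9$ (a single $9\times 9$ overlaps the $8\times 8$), giving $6+3+6$, $6+4+5$ or $6+5+4$; the first is killed by Lemma~\ref{lemx3y}, and in each of the other two the region between the $8\times 8$ and the right-border squares contains a strip of width at most~$3$ whose shape admits only $1\times 1$, $2\times 2$ and at most one $3\times 3$ square. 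A short case analysis shows every such tiling creates either a forbidden adjacency between two equal squares or an isolated cell that cannot itself be tiled without producing one.

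For the $5+4+6$ case, I would decompose the left border below the top $5\times 5$: the remaining $10$ units split as $5+3+7$, $5+4+6$, or $5+6+4$, and Lemma~\ref{lem64} kills $5+6+4$ (a $4\times 4$ corner would have the middle $6\times 6$ as a neighbor). For each of the two remaining sub-cases I would next consider the right-border decomposition and track the squares forced along the bottom edge of the top-middle $4\times 4$, whose admissible sizes are bounded by the $5\times 5$ and $6\times 6$ neighbors on either side (as in the proof of Lemma~\ref{no12}). In every combination the forced interior configuration produces either a trivial adjacency or a narrow strip unfillable by the same width-$2$ or width-$3$ argument as above.

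The main obstacle will be the $5+4+6$ case, because it branches into several sub-cases that each require tracing interior squares for a few more layers; the $4+5+6$ case, by contrast, is essentially determined once $a=3$ and the $8\times 8$ are forced.
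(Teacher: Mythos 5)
Your enumeration and elimination phase matches the paper exactly: the same candidate decompositions of the $15$-border, the same three lemmas (\ref{lemx3y}, \ref{lem64}, \ref{lem43x}) doing the killing, and the same two survivors $4+5+6$ and $5+4+6$. The gap is that the elimination of those two survivors --- which is the entire content of the lemma --- is never actually carried out. For $4+5+6$ you correctly force $a=3$ and then the $8\times8$, but the argument ends with ``a short case analysis shows\ldots'', which asserts the conclusion rather than proving it. Worse, the whole continuation is unnecessary: once $a=3$, the left border reads $4+3+8$, and Lemma~\ref{lem43x} (which you already used to kill $4+3+8$ as a top border) says precisely that such a border cannot be completed --- equivalently, no admissible $15$-decomposition begins $4+3+\cdots$, so the case dies on the spot. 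That is how the paper disposes of $4+5+6$ in two lines, with no $8\times8$ and no right-border analysis.

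For $5+4+6$ the write-up is both incomplete and internally inconsistent. You never exclude the two-element left border $5+10$ (it fails because the $10\times10$ would leave a height-$1$ strip under the top-middle $4\times4$); your list of three-element left borders includes $5+3+7$, which your own first step already eliminated via Lemma~\ref{lemx3y}, so the left border is in fact \emph{forced} to be $5+4+6$ and there are no ``two remaining sub-cases'' to branch over; and the closing claim that ``in every combination the forced interior configuration produces either a trivial adjacency or a narrow strip unfillable'' is again the statement of the result, not an argument. The paper finishes this case by tracing one determined chain of forced squares under the top $4\times4$ (size $1$, then $2$, then $3$, \ldots) until an unfillable cell appears; some explicit chain of that kind must be exhibited for the proof to stand.
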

\begin{proof}
We start again with one of the borders consisting of three elements (such a border exists according to Lemma 3).
First we list all decompositions of a 15-border into three or more elements. If a decomposition appears for the second time, it is put in brackets:
\begin{itemize}
	\item corner: 4, rest of border: 11 $\Rightarrow$ 4+3+8 or 4+5+6 or 4+6+5 or 4+7+4
	\item corner: 5, rest of border: 10 $\Rightarrow$ 5+3+7 or 5+4+6 (or 5+6+4)
	\item corner: 6, rest of border: 9 $\Rightarrow$ 6+3+6 (or 6+4+5 or 6+5+4)
	\item corner: 7, rest of border: 8 $\Rightarrow$ (7+3+5)
	\item corner: 8, rest of border: 7 $\Rightarrow$ (8+3+4)
\end{itemize}
But (because of Lemma \ref{lem64}) 4+6+5 and 4+7+4 are impossible, and (because of Lemma \ref{lemx3y}) 5+3+7 and 6+3+6 are impossible, and (because of Lemma \ref{lem43x}) 4+3+8 is impossible. So  we have two options left: 4+5+6 and 5+4+6.

First we try to start with 4+5+6:
	    \begin{center}
			\begin{pspicture}(0,0)(15,6)
				\rput(0.5,1.5){a}
				\psline(0,6)(15,6)
				\psline(15,6)(15,0)
				\psline(15,0)(9,0)
				\psline(9,0)(9,6)
				\psline(9,1)(4,1)
				\psline(4,1)(4,6)
				\psline(4,2)(0,2)
				\psline(0,2)(0,6)
			\end{pspicture}
			\end{center}
In position $a$ we must have $a=3$ to avoid overlap, but there is no decomposition of a 15-border starting with 4+3. Thus 4+5+6 can't be completed to a squared square.

Next we try to start with 5+4+6:
	    \begin{center}
			\begin{pspicture}(0,-4)(15,6)
				\rput(0.5,0.5){a}
				\rput(5.5,1.5){b}
				\rput(6.5,1.5){c}
				\rput(4.5,0.5){d}
				\rput(4.5,-0.5){e}
				\rput(7.5,1.5){f}
				\rput(8.5,1.5){g}
				\rput(8.5,0.5){h}
				\rput(4.5,-1.5){i}
				\rput(4.5,-2.5){j}
				\psline(0,6)(15,6)
				\psline(15,6)(15,0)
				\psline(15,0)(9,0)
				\psline(9,0)(9,6)
				\psline(9,2)(5,2)
				\psline(5,1)(5,6)
				\psline(5,1)(0,1)
				\psline(0,1)(0,6)
				\psline[linestyle=dashed](0,1)(0,-4)
				\psline[linestyle=dashed](0,-3)(6,-3)
				\psline[linestyle=dashed](6,-3)(6,-4)
				\psline[linestyle=dashed](4,1)(4,-3)
			\end{pspicture}
			\end{center}
Here $a$ can't reach the bottom (otherwise $b=c=1$), so we must have 5+4+6 on the left border (dashed lines).
Now $b\leq 2$ (otherwise $d=e=1$). But if $b=2$ then $f=1$ and $g$ can't be filled. So we must have $b=1$. Now if $cf=2$ then $g=1$ and $h$ can't be filled, so we must have $cfg=3$, and thus $de=2$, thus $i=1$ and $j$ can't be filled.
\end{proof}

\section{Size 16x16}
\label{Sect4}

\begin{lem}\label{lemx4y}It is not possible to complete a border decomposed $x+4+y$ to a full squared square if $x\geq 6$ and $y\geq 6$.\end{lem}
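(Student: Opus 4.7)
The plan is to mimic the argument of Lemma \ref{lemx3y}, but with a middle border square of size 4 and a wider ``neck'' beneath it. First I would fix coordinates: place a square $L$ of size $x \geq 6$, the central $4{\times}4$ square $S$, and a square $R$ of size $y \geq 6$ along the top border. Since $x, y \geq 6$, both $L$ and $R$ extend at least two units below the bottom of $S$, so the region immediately below $S$ is a cavity whose top is a width-4 segment and which stays exactly 4 units wide for at least 2 units of vertical depth.

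Next I would enumerate the squares whose top edges lie along the bottom of $S$. Each such square has size at most 4 (by horizontal fit) and size not equal to 4 (else it shares a full border with $S$), so its size lies in $\{1,2,3\}$; horizontally adjacent ones must differ in size, since otherwise they share a full vertical edge. Hence the only possible left-to-right configurations that cover the width-4 segment are $(1,3)$, $(3,1)$, and $(1,2,1)$.

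Then I would dispatch each configuration by inspecting the cell directly below an outer unit square. In $(1,3)$ (and symmetrically $(3,1)$), below the unit square lies a $1{\times}1$ cavity bounded on the left by $L$, on the right by the size-3 square (which reaches depth 7), and on top by the unit square itself; filling it forces a second unit square sharing a full edge with the first, which is forbidden. In $(1,2,1)$, below either outer unit square is again a $1{\times}1$ cavity, now bounded by $L$ (or $R$), the central $2{\times}2$ (which reaches depth 6), and the unit square on top, and the same contradiction appears.

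The main obstacle is verifying that in every case the cavity really is one unit wide for at least one unit of vertical depth, so that no wider square could conceivably fit; this is exactly what the hypothesis $x, y \geq 6$ guarantees, since it forces both $L$ and $R$ to overhang $S$ by at least two units on each side. Once this geometric point is secured, every configuration collapses into an illegal pair of adjacent unit squares, and the lemma follows.
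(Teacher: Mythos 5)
Your proof is correct and follows essentially the same route as the paper: the paper cases on the size of the leftmost square in the row beneath the central $4\times4$ (ruling out $4$, then $3$, $2$, $1$ in turn), which amounts to exactly your enumeration of the row configurations $(1,3)$, $(3,1)$, $(1,2,1)$, each killed by a forced adjacent pair of unit squares. The only cosmetic difference is that you list the admissible rows first and then locate the fatal $1\times1$ cavity, while the paper interleaves the two steps.
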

\begin{proof}
Such a decomposition would look like this:
	    \begin{center}
			\begin{pspicture}(-2,0)(8,6)
				\rput(1.5,1.5){a}
				\rput(2.5,1.5){b}
				\rput(3.5,1.5){c}
				\rput(4.5,1.5){d}
				\rput(1.5,0.5){e}
				\rput(4.5,0.5){f}
				\psline(1,0)(1,6)
				\psline(0,6)(6,6)
				\psline(5,6)(5,0)
				\psline(1,2)(5,2)
				\psline[linestyle=dashed](0,6)(-2,6)
				\psline[linestyle=dashed](6,6)(8,6)
			\end{pspicture}
			\end{center}
But $a=4$ is not allowed. If $a=3$ we would get $d=f=1$ which is not allowed. If $a=2$ we would get $c=1$ and thus $d=1$, which is not allowed. Finally, if $a=1$ we would have $b\geq 2$ and thus $e=1$ which is not allowed.
\end{proof}

\begin{lem}\label{lem16-3}In size 16, no border can be decomposed into \emph{exactly three} elements.\end{lem}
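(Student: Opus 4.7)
The plan is to mimic Lemmas~\ref{no12}--\ref{no15}: enumerate all allowable three-element decompositions of a border of length $16$, eliminate most of them with the earlier lemmas, and handle the few survivors by direct geometric case analysis.

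Writing the border as $a+b+c=16$ with $a,c\ge 4$ (Lemma~\ref{lem2}), $b\ge 3$ (Lemma~\ref{lem1}), and $a\ne b$, $b\ne c$ (else two adjacent equal-size squares share a full border), the nine candidates up to reflection are $4+3+9$, $4+5+7$, $4+7+5$, $4+8+4$, $5+3+8$, $5+4+7$, $5+6+5$, $6+3+7$, and $6+4+6$. Lemma~\ref{lem64} rules out $4+7+5$ and $4+8+4$ (a corner-$4$ would have a neighbor of size $\ge 6$); Lemma~\ref{lemx3y} rules out $5+3+8$ and $6+3+7$; Lemma~\ref{lem43x} rules out $4+3+9$; and Lemma~\ref{lemx4y} rules out $6+4+6$. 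Only $4+5+7$, $5+4+7$, and $5+6+5$ remain.

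For each survivor I would propagate forced placements inward from the three top-border squares, in the style of the $4+3+6$ and $5+4+6$ arguments in Lemmas~\ref{no13} and~\ref{no15}. For $4+5+7$, the corner-$4$'s perpendicular neighbor is forced to be $3$ (a $5$ there overlaps the middle-$5$, a $4$ is a full-border violation, and $\ge 6$ is barred by Lemma~\ref{lem64}); Lemma~\ref{lem43x} applied to that perpendicular border then narrows its next element to $\{4,5\}$, and in each subcase the $1\times 1$ cell trapped in the step at the first $3$ forces a successor whose admissible sizes each collapse to either an overlap or a $1\times 1$ sharing a full border with the trapped cell. For $5+4+7$, the square abutting the bottom-left of the middle-$4$ has size $s\in\{1,2,3\}$ (size $4$ violates the full-border rule with the $4$ above, sizes $\ge 5$ overlap the corner-$7$); $s=3$ traps an untileable $1\times 3$ gap against the corner-$7$, $s=2$ traps a $2\times 2$ region fillable only by a forbidden same-size neighbor, and $s=1$ is ruled out by pushing the same type of argument further toward the corner-$5$. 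For $5+6+5$, a symmetric analysis at the two concave corners beneath the middle-$6$ forces squares whose natural candidate size $5$ would share a full border with the corresponding corner-$5$, and the resulting squeeze directly under the $6$ cannot be tiled.

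The main obstacle will be the $5+6+5$ case, since no earlier lemma prunes it: the argument must track the forced tiling on both sides of the middle-$6$ simultaneously and show that however the squares below the $6$ are arranged, they conflict either with one of the corner-$5$'s or with the full-border rule.
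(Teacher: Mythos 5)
Your enumeration and pruning are exactly the paper's: the same nine candidates up to reflection, eliminated by the same four lemmata (\ref{lemx3y}, \ref{lem64}, \ref{lem43x}, \ref{lemx4y}), leaving the same three survivors $4+5+7$, $5+4+7$ and $5+6+5$. That half of the argument is fine.

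The gap is in the elimination of the survivors, which is the bulk of the work and which your sketches both leave undone and, in places, mischaracterize. You repeatedly assume each configuration dies \emph{locally}, near the three starting squares, but none of them does. For $4+5+7$ the contradiction is not at ``the $1\times 1$ cell trapped in the step'': that cell and its neighbours are filled consistently, and the forced chain only fails after it has fixed the entire left border as $4+3+5+4$ and marched along the bottom border. For $5+4+7$ your easy subcases ($s=2$, $s=3$) are the ones that die quickly; the surviving case $s=1$ is not ``ruled out by pushing toward the corner-$5$'' --- it forces a $3\times 3$ beside the unit square and then splits into two completions of the left border ($5+4+\cdots$ and $5+6+5$), each of which propagates forced squares across roughly half of the $16\times 16$ square before an unfillable cell appears. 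For $5+6+5$ nothing fails ``directly under the $6$'': the middle $6$ merely forces both side borders to be the four-element dissection $5+4+3+4$ and the bottom border to be $4+5+3+4$ (up to reflection), and only with all four borders pinned down does a chain of forced $1$'s and $2$'s in the centre run aground. Note in particular that your survey never mentions the four-element dissections of $16$ ($4+3+4+5$ and its rearrangements), yet these are precisely what the perpendicular borders are forced to be in every surviving case; without them the case analysis cannot even be set up. So the reduction to three configurations is correct, but the elimination of those three is essentially still to be written.
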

\begin{proof}
In size 16x16, a border can be decomposed into four elements: 4+3+4+5 and 4+3+5+4. Besides, there are many possibilities to decompose a 16-border into \emph{three} elements -- if a decomposition appears for the second time, it is put in brackets:
\begin{itemize}
	\item corner 4, rest 12 $\Rightarrow$ 4+3+9 or 4+5+7 or 4+7+5 or 4+8+4
	\item corner 5, rest 11 $\Rightarrow$ 5+3+8 or 5+4+7 or 5+6+5 (or 5+7+4)
	\item corner 6, rest 10 $\Rightarrow$ 6+3+7 or 6+4+6
	\item corner 7, rest  9 $\Rightarrow$ (7+3+6 or 7+4+5 or 7+5+4)
	\item corner 8, rest  8 $\Rightarrow$ (8+3+5)
	\item corner 9, rest  7 $\Rightarrow$ (9+3+4)	
\end{itemize}
Lemma \ref{lemx3y} cancels out 5+3+8 and 6+3+7. Lemma \ref{lem64} cancels out 4+7+5 and 4+8+4. Lemma \ref{lemx4y} cancels out 6+4+6. Lemma \ref{lem43x} cancels out 4+3+9. This leaves the following three possible decompositions into \emph{three} elements: 4+5+7, 5+4+7 and 5+6+5.

First we start with 4+5+7:
	    \begin{center}
			\begin{pspicture}(0,-2)(16,14)
				\rput(0.5,9.5){a}
				\rput(3.5,9.5){b}
				\rput(0.5,6.5){c}
				\rput(3.5,8.5){d}
				\rput(5.5,8.5){e}
				\rput(6.5,8.5){f}
				\rput(5.5,7.5){g}
				\rput(8.5,8.5){h}
				\rput(8.5,7.5){i}
				\rput(4.5,6.5){j}
				\rput(4.5,5.5){k}
				\rput(0.5,1.5){l}
				\rput(4.5,-1.5){m}
				\rput(4.5,1.5){n}
				\rput(7.5,-1.5){o}
				\rput(5.5,1.5){p}
				\rput(5.5,3.5){q}
				\rput(5.5,4.5){r}
				\pspolygon(0,10)(0,14)(16,14)(16,7)(9,7)(9,9)(4,9)(4,10)(0,10)
				\psline(4,10)(4,14)
				\psline(9,9)(9,14)
				\psline[linestyle=dashed](0,-2)(0,10)
				\psline[linestyle=dashed](0,2)(5,2)
				\psline[linestyle=dashed](0,7)(5,7)
				\psline[linestyle=dashed](3,9)(4,9)
				\psline[linestyle=dashed](3,7)(3,10)
				\psline[linestyle=dashed](4,-2)(4,2)
				\psline[linestyle=dashed](4,1)(7,1)
				\psline[linestyle=dashed](7,3)(7,-2)
				\psline[linestyle=dashed](7,3)(5,3)
				\psline[linestyle=dashed](5,1)(5,9)
				\psline[linestyle=dashed](5,5)(9,5)
				\psline[linestyle=dashed](9,5)(9,7)
				\psline[linestyle=dashed](0,-2)(7,-2)
				\psline[linestyle=dashed](5,4)(6,4)
				\psline[linestyle=dashed](6,4)(6,3)
			\end{pspicture}
			\end{center}
Because $a$ is at the border it must be $a\geq 3$. Because $a\neq 4$ we have $a=3$ and $b=1$. Because $c\geq 4$ we have $d=2$. If we had $e=1$ then we also had $f\geq 2$ and $g=1$ which is not allowed. Also $e=2$ is not allowed. If we had $e=3$ then we had $h=i=1$ which is not allowed. Thus we can conclude that $e=4$ (dashed lines). Now if we had $c=4$ then $j=k=1$ which is not allowed. Thus we have $c=5$ and $l=4$, thus $m=3$ and $n=1$. Now because $o\geq 4$ we have $p=2$. Thus $q=1$, thus $r$ can't be filled.

Second we start with 5+4+7:
	    \begin{center}
			\begin{pspicture}(0,0)(16,7)
				\rput(5.5,2.5){a}
				\rput(6.5,2.5){b}
				\rput(0.5,1.5){c}
				\rput(4.5,1.5){d}
				\pspolygon(0,2)(5,2)(5,3)(9,3)(9,0)(16,0)(16,7)(0,7)(0,2)
				\psline(5,3)(5,7)
				\psline(9,3)(9,7)
				\psline[linestyle=dashed](5,2)(6,2)
				\psline[linestyle=dashed](6,0)(6,3)
				\psline[linestyle=dashed](6,0)(9,0)
			\end{pspicture}
			\end{center}
Here we must have $a=1$. From this follows $b=3$ (dashed lines). Then we have two possible ways to go on: (I) $c=4$ and $d=2$ or (II) $c=6$.
Try (I):
  	  \begin{center}
			\begin{pspicture}(0,0)(16,16)
				\pspolygon(0,7)(4,7)(4,9)(16,9)(16,16)(0,16)(0,7)
				\psline(0,11)(6,11)
				\psline(5,12)(9,12)
				\psline(4,9)(4,11)
				\psline(6,9)(6,12)
				\psline(9,9)(9,16)
				\psline(5,11)(5,16)
				\psline[linestyle=dashed](0,0)(0,7)
				\psline[linestyle=dashed](0,0)(7,0)
				\psline[linestyle=dashed](4,0)(4,4)
				\psline[linestyle=dashed](7,0)(7,3)
				\psline[linestyle=dashed](4,3)(8,3)
				\psline[linestyle=dashed](5,3)(5,6)
				\psline[linestyle=dashed](8,3)(8,6)
				\psline[linestyle=dashed](0,4)(5,4)
				\psline[linestyle=dashed](3,4)(3,7)
				\psline[linestyle=dashed](3,6)(8,6)
				\psline[linestyle=dashed](4,6)(4,7)
				\psline[linestyle=dashed](7,6)(7,9)
				\rput(4.5,8.5){f}
				\rput(5.5,8.5){g}
				\rput(4.5,7.5){h}
				\rput(0.5,6.5){i}
				\rput(0.5,3.5){j}
				\rput(3.5,6.5){k}
				\rput(3.5,5.5){l}
				\rput(4.5,0.5){m}
				\rput(4.5,3.5){n}
				\rput(5.5,3.5){o}
				\rput(7.5,0.5){p}
			\end{pspicture}
			\end{center}
If we had $f=1$, then $g\geq 2$ and $h$ couldn't be filled. But $f=2$ is not allowed, so we have $f\geq 3$. From this we get $i=3$, thus $j=4$. Because we must have $k=1$ and $l=2$ to fill this gap, it follows $m\leq 4$, thus $m=3$, thus $n=1$, $f=3$ and $o=3$. But now $p$ can't be filled.

Try (II):
	    \begin{center}
			\begin{pspicture}(0,0)(16,16)
				\pspolygon(0,5)(6,5)(6,9)(16,9)(16,16)(0,16)(0,5)
				\psline(0,11)(6,11)
				\psline(6,9)(6,12)
				\psline(5,12)(9,12)
				\psline(9,9)(9,16)
				\psline(5,11)(5,16)
				\psline[linestyle=dashed](0,0)(0,5)
				\psline[linestyle=dashed](0,0)(12,0)
				\psline[linestyle=dashed](5,0)(5,5)
				\psline[linestyle=dashed](9,0)(9,4)
				\psline[linestyle=dashed](6,4)(6,5)
				\psline[linestyle=dashed](10,3)(10,4)
				\psline[linestyle=dashed](11,4)(11,9)
				\psline[linestyle=dashed](5,4)(11,4)
				\psline[linestyle=dashed](9,3)(12,3)
				\psline[linestyle=dashed](12,3)(12,0)
				\rput(0.5,4.5){f}
				\rput(5.5,0.5){g}
				\rput(5.5,4.5){h}
				\rput(6.5,4.5){i}
				\rput(9.5,0.5){j}
				\rput(9.5,3.5){k}
				\rput(10.5,3.5){l}
			\end{pspicture}
			\end{center}
Here we must have $f=5$ until the bottom. Because there is no 16-dissection starting with 5+3+\ldots, it follows $g\neq 3$, so we must have $g=4$ and $h=1$.
From this follows $i=5$, thus $j=3$, $k=1$, and $l$ can't be filled.

Third and finally we start with 5+6+5:
	    \begin{center}
			\begin{pspicture}(0,0)(16,16)
				\rput(3.5,4.5){a}
				\rput(3.5,5.5){b}
				\rput(4.5,10.5){c}
				\rput(4.5,9.5){d}
				\rput(5.5,6.5){e}
				\rput(5.5,5.5){f}
				\rput(4.5,7.5){g}
				\rput(5.5,7.5){h}
				\rput(6.5,8.5){i}
				\rput(6.5,9.5){j}
				\pspolygon(0,0)(4,0)(4,4)(3,4)(3,7)(4,7)(4,11)(5,11)(5,10)(11,10)(11,11)(12,11)(12,7)(13,7)(13,4)(12,4)(12,0)(16,0)(16,16)(0,16)(0,0)
				\psline(0,4)(3,4)
				\psline(0,7)(3,7)
				\psline(0,11)(4,11)
				\psline(5,16)(5,11)
				\psline(11,16)(11,11)
				\psline(12,11)(16,11)
				\psline(13,7)(16,7)
				\psline(13,4)(16,4)
				\psline[linestyle=dashed](4,0)(12,0)
				\psline[linestyle=dashed](9,0)(9,5)
				\psline[linestyle=dashed](9,5)(4,5)
				\psline[linestyle=dashed](4,5)(4,4)
				\psline[linestyle=dashed](9,3)(12,3)
				\psline[linestyle=dashed](3,5)(4,5)
				\psline[linestyle=dashed](5,5)(5,7)
				\psline[linestyle=dashed](5,7)(4,7)
				\psline[linestyle=dashed](4,10)(5,10)
				\psline[linestyle=dotted,linewidth=1pt](4,8)(6,8)
				\psline[linestyle=dotted,linewidth=1pt](6,8)(6,10)
				\psline[linestyle=dotted,linewidth=1pt](5,8)(5,7)
			\end{pspicture}
			\end{center}
The left and right border must both contain at least 3 elements, because if we put a 11x11 on one side it would overlap the 6x6 from the top border.
They have be (top to bottom) 5+4+3+4, the only possible remaining 16-dissection with a 5x5 in the corner (it is the only one because 5+6+5 would overlap). This is already shown in the picture above.
On the bottom border we can't put just one big 9x9 square, because we couldnt' fill the gaps to the left and right. So we must put 4+5+3+4 (or, symmetrically, 4+3+5+4) onto the bottom border. Because we don't distinguish symmetrical versions we can assume that we have 4+5+3+4 (dashed lines).
We must have $a=1$, $b=2$ and $c=1$ (dashed lines). If we had $d=3$, then $e=f=1$. So we must have $d=2$ and $g=1$ (dotted lines), thus $h\geq 2$, thus $i=1$ and $j$ can't be filled.

Because this was the last possible way to dissect a 16-border into three elements we have finished the proof of Lemma \ref{lem16-3}.
\end{proof}

\begin{lem}\label{lem16-4}Every squared square with size 16 has four elements on every border.\end{lem}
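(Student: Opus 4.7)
The plan is to rule out borders with $1$, $2$, $3$, or $\ge 5$ elements. Borders with one element would mean no dissection at all, and Lemma \ref{lem16-3} already eliminates exactly three; so only $2$ and $\ge 5$ remain to be handled. For $\ge 5$ elements, a quick sum-of-minimums argument works: Lemmas \ref{lem1} and \ref{lem2} give corner size $\ge 4$ and middle size $\ge 3$, and nontriviality forbids two adjacent border squares of size $3$, so a five-element border has length at least $4+3+4+3+4=18>16$.

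The main work is ruling out borders with exactly two elements. Assume, WLOG by symmetry, that the top border consists of two corner squares of sizes $a$ and $16-a$ with $4\le a\le 7$ (the case $a=8$ is excluded because two equal corner squares would share a full border). The case $a=4$ is handled instantly by Lemma \ref{lem64}, since the $4\times 4$ top-left would be adjacent along the top to a $12\times 12$ square. For $a\in\{6,7\}$ I would look at the left border, whose top element is fixed at size $a$. A two-element left border would place a $(16-a)\times(16-a)$ bottom-left square whose footprint $[0,16-a]^2$ overlaps the top-right square's footprint $[a,16]^2$ in a nonempty $(16-2a)\times(16-2a)$ region; a three-element left border is forbidden by Lemma \ref{lem16-3}; and a short enumeration using $m_1,m_2\ge 3$, $\mathrm{BL}\ge 4$, and the adjacency constraints rules out four elements (for $a=7$ the required middles-plus-$\mathrm{BL}$ equals $9$, short of the minimum $3+3+4=10$; for $a=6$ the only candidate forces $m_1=m_2=3$, forbidden). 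No left-border decomposition remains, contradiction.

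The trickiest case is $a=5$. Here the enumeration forces the left border to be $5+4+3+4$, the right border to be $11+5$, and, given the resulting $\mathrm{BL}=4$ and $\mathrm{BR}=5$, the bottom border to be $4+3+4+5$ as the unique admissible decomposition. At that point every border square is pinned down, and I would inspect the residual interior: placing all these squares leaves a $1$-wide vertical strip---the column $x\in[4,5]$ between $y=7$ and $y=11$, flanked on the left by the left-border $4\times 4$ and on the right by the top-right $11\times 11$---which can only be tiled by four stacked $1\times 1$ squares, any two adjacent of which share a full border, violating nontriviality. This contradicts the $a=5$ hypothesis and completes the lemma. The main obstacle is precisely this last case, where the contradiction only emerges after the entire border configuration is pinned down and the residual interior is examined in detail.
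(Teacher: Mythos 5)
Your proof is correct, but it follows a genuinely different route from the paper's. The paper first invokes Lemma \ref{lem3} to obtain one border with at least three elements, hence (by Lemma \ref{lem16-3} and the length bound for five or more elements) exactly four, namely $4+3+4+5$ or $4+3+5+4$; it then propagates around the square: a two-element side border would either overlap the existing 5x5 or leave an unfillable gap under the top corner elements, so both side borders have four elements, and the bottom border then carries a 4x4 or 5x5 in each corner and therefore cannot have two or three elements either. You instead attack two-element borders head-on, splitting on the corner size $a$ of a hypothetical $a+(16-a)$ border: $a=8$ is trivial, $a=4$ dies by Lemma \ref{lem64}, $a\in\{6,7\}$ dies because the adjacent border then admits no decomposition at all, and $a=5$ forces the entire border layout ($5+4+3+4$ on the left, $11+5$ on the right, $4+3+4+5$ on the bottom) and isolates a $1\times 4$ strip between the left-border 4x4 and the top-right 11x11 that can only be tiled by four stacked unit squares, which is trivial. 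I checked the $a=5$ bookkeeping (the left border is indeed forced to $5+4+3+4$, the bottom-right corner to $5$, the bottom border to $4+3+4+5$, and any square meeting the open strip must have width $1$), and it holds. Your route is more self-contained---it never needs Lemma \ref{lem3}---at the cost of the extra forced-configuration analysis for $a=5$; the paper's route buys a shorter argument by starting from the two known four-element decompositions. Both rest on Lemma \ref{lem16-3} and Lemma \ref{lem64}.
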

\begin{proof}
There are two possible dissections into four elements: 4+3+4+5 and 4+3+5+4. From Lemma \ref{lem16-3} we know that we can't decompose a 16-border into three elements. But from Lemma \ref{lem3} we know that there must have at least 6 elements along the border. And because it is not possible to decompose a 16-border into five or more elements, every 16x16-square must contain \emph{at least one} border dissected into exactly four elements.

If we start with 4+3+4+5, then we can't have a 12x12 square on the left border until the bottom, because it would overlap the existing 5x5 square. We also can't have a 11x11 square on the right border until the bottom, because the gap between this and the upper border elements can't be filled.

If we start with 4+3+5+4, then we can't have a 12x12 square on the left or right border until the bottom, because it would overlap the existing 5x5 square.

So we can assume that we have four elements on the top border, and we deduced that we must have four elements on the left and right border too. Thus follows we have at least three elements on the bottom border too (because we have 4x4 or 5x5 in both bottom corners, and there is no remaining dissection of the 16-border into three elements).

Thus we have proven that every 16x16 square has four elements on \emph{every} border.
\end{proof}

\begin{lem}\label{uni16}Besides the squared square shown on page \pageref{fig1116} (and the mirrored version of it), there is no other squared square of size 16.\end{lem}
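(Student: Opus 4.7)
The plan is to apply Lemma~\ref{lem16-4}, which forces every one of the four borders of a $16\times 16$ squared square to be dissected into exactly four elements, drawn from the two possible decompositions $4+3+4+5$ and $4+3+5+4$. Taking reversal into account, each border is one of four oriented sequences, and the corner squares (size $4$ or $5$) are thereby pinned down once the orientation of each border is chosen. The proof reduces to enumerating the consistent combinations of the four border orientations and, for each one, showing that the interior either cannot be completed or is completed uniquely.

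First I would exploit the symmetries of the square. By a mirror reflection I may fix the top border to be either $4+3+4+5$ or $4+3+5+4$, read left to right. Each choice prescribes both upper corners, and the four-element dissection of the left and right borders must then begin with the prescribed corner size. Since only one oriented dissection starts with a $5$ (namely $5+4+3+4$), while three start with a $4$, this limits the options for the side borders to a very short list. Once the left and right borders are chosen, the two lower corners are determined and the bottom border is constrained to one of at most three oriented dissections whose endpoints match those corners.

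Second, for each surviving combination of the four borders I would fill in the interior by the local propagation method used throughout Sections~\ref{Sect2}--\ref{Sect4}. Starting adjacent to the known border squares, the prohibition on two equal squares sharing a full edge, together with Lemmas~\ref{lem1}, \ref{lem2}, \ref{lemx3y}, \ref{lem43x} and \ref{lemx4y}, forces the size of each newly added square, typically leaving only one admissible candidate at each step. I would follow each branch until it either yields a contradiction (a forbidden small square, two equal squares sharing a full edge, or a gap that cannot be tiled) or it closes up into a valid dissection.

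The main obstacle is the combinatorial bookkeeping: there are on the order of a dozen border-orientation combinations to examine, and each requires a short but careful local argument. However, the dihedral symmetry of the ambient square collapses many cases to equivalents, and the rigid corner constraints eliminate most of the remainder from the outset, since a corner square of size $4$ or $5$ together with the forbidden $4+4$ and $5+5$ adjacencies leaves very few compatible choices for the first interior square along each border. Every branch except one terminates in a contradiction; the single surviving branch reproduces exactly the dissection of Figure~1 (page~\pageref{fig1116}), proving that this is the unique $16\times 16$ squared square up to mirror symmetry.
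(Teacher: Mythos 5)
Your plan coincides with the paper's own proof in every structural respect: both invoke Lemma~\ref{lem16-4} to force each of the four borders into one of the two dissections 4+3+4+5 and 4+3+5+4, both use mirror symmetry and the corner constraints (a 4x4 corner cannot have two 5x5 border neighbours, since that overlaps) to cut the combinations down to a short list --- the paper sorts them by the multiset of corner sizes into three corner types and finds exactly four border completions (A1), (B1), (B2), (C1) --- and both then attack each remaining centre region by forced local deductions. The one substantive shortfall is that your proposal stops at the point where the paper's proof actually begins: the assertion that ``every branch except one terminates in a contradiction'' \emph{is} the lemma, and you do not exhibit the deductions that establish it. In the paper each of (A1), (B1), (B2) is killed by an explicit chain of forced 1x1 and 2x2 placements ending in a forbidden configuration, and (C1) is shown to close up uniquely because the residual 6x6 in the middle cannot be subdivided further by Lemma~\ref{least11}. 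To turn your outline into a proof you must carry out those four propagations explicitly; nothing in your setup guarantees in advance that the branches terminate quickly or at all, so the casework cannot be waved through.
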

\begin{proof}
From Lemma \ref{lem16-4} we deduce that on every border of a 16x16 square there is either 4+3+4+5 or 4+3+5+4. So there are three possible arrangements of \emph{corners}:
	    \begin{center}
			\begin{pspicture}(0,0)(16,4)
				\rput(0.5,0.5){5}
				\rput(0.5,3.5){4}
				\rput(2,2){A}
				\rput(3.5,0.5){4}
				\rput(3.5,3.5){5}
				\psline(0,0)(0,4)
				\psline(0,4)(4,4)
				\psline(4,4)(4,0)
				\psline(4,0)(0,0)
				\rput(6.5,0.5){4}
				\rput(6.5,3.5){4}
				\rput(8,2){B}
				\rput(9.5,0.5){4}
				\rput(9.5,3.5){5}
				\psline(6,0)(6,4)
				\psline(6,4)(10,4)
				\psline(10,4)(10,0)
				\psline(10,0)(6,0)
				\rput(12.5,0.5){4}
				\rput(12.5,3.5){4}
				\rput(14,2){C}
				\rput(15.5,0.5){4}
				\rput(15.5,3.5){4}
				\psline(12,0)(12,4)
				\psline(12,4)(16,4)
				\psline(16,4)(16,0)
				\psline(16,0)(12,0)
			\end{pspicture}
			\end{center}
First we observe that there is only one way (A1) to complete the borders between the corners A (see below).
If there is a 4x4 in the corner, then it is not possible that \emph{both} neighbors are 5x5 (would lead to an overlap). 
Because of this and because we don't distinguish symmetric decompositions, there are only two ways (B1) and (B2) to complete the corners B and only one way (C1) to complete the corners C:
	    \begin{center}
			\begin{pspicture}(0,0)(20.5,4)
				\rput(0.5,3.5){\textbf{4}}
				\rput(1.5,3.5){3}
				\rput(2.5,3.5){4}
				\rput(3.5,3.5){\textbf{5}}
				\rput(0.5,2.5){3}
				\rput(3.5,2.5){4}
				\rput(0.5,1.5){4}
				\rput(3.5,1.5){3}
				\rput(0.5,0.5){\textbf{5}}
				\rput(1.5,0.5){4}
				\rput(2.5,0.5){3}
				\rput(3.5,0.5){\textbf{4}}
				\rput(2,2){(A1)}
				\psline(0,0)(0,4)
				\psline(0,4)(4,4)
				\psline(4,4)(4,0)
				\psline(4,0)(0,0)
				\rput(6,3.5){\textbf{4}}
				\rput(7,3.5){3}
				\rput(8,3.5){4}
				\rput(9,3.5){\textbf{5}}
				\rput(6,2.5){5}
				\rput(9,2.5){4}
				\rput(6,1.5){3}
				\rput(9,1.5){3}
				\rput(6,0.5){\textbf{4}}
				\rput(7,0.5){3}
				\rput(8,0.5){5}
				\rput(9,0.5){\textbf{4}}
				\rput(7.5,2){(B1)}
				\psline(5.5,0)(5.5,4)
				\psline(5.5,4)(9.5,4)
				\psline(9.5,4)(9.5,0)
				\psline(9.5,0)(5.5,0)
				\rput(11.5,3.5){\textbf{4}}
				\rput(12.5,3.5){3}
				\rput(13.5,3.5){4}
				\rput(14.5,3.5){\textbf{5}}
				\rput(11.5,2.5){5}
				\rput(14.5,2.5){4}
				\rput(11.5,1.5){3}
				\rput(14.5,1.5){3}
				\rput(11.5,0.5){\textbf{4}}
				\rput(12.5,0.5){5}
				\rput(13.5,0.5){3}
				\rput(14.5,0.5){\textbf{4}}
				\rput(13,2){(B2)}
				\psline(11,0)(11,4)
				\psline(11,4)(15,4)
				\psline(15,4)(15,0)
				\psline(15,0)(11,0)
				\rput(17,3.5){\textbf{4}}
				\rput(18,3.5){5}
				\rput(19,3.5){3}
				\rput(20,3.5){\textbf{4}}
				\rput(17,2.5){3}
				\rput(20,2.5){5}
				\rput(17,1.5){5}
				\rput(20,1.5){3}
				\rput(17,0.5){\textbf{4}}
				\rput(18,0.5){3}
				\rput(19,0.5){5}
				\rput(20,0.5){\textbf{4}}
				\rput(18.5,2){(C1)}
				\psline(16.5,0)(16.5,4)
				\psline(16.5,4)(20.5,4)
				\psline(20.5,4)(20.5,0)
				\psline(20.5,0)(16.5,0)
			\end{pspicture}
			\end{center}
Now we will try to fill the center parts of these four dissections and we'll observe that only (C1) leads to a (unique) squared square.

The remaining center part of (A1):
	    \begin{center}
			\begin{pspicture}(0,0)(10,10)
				\pspolygon(2,1)(6,1)(6,0)(9,0)(9,1)(10,1)(10,4)(9,4)(9,8)(8,8)(8,9)(4,9)(4,10)(1,10)(1,9)(0,9)(0,6)(1,6)(1,2)(2,2)(2,1)
				\rput(9.5,1.5){a}
				\rput(8.5,0.5){b}
				\rput(6.5,0.5){c}
				\rput(9.5,2.5){d}
				\rput(7.5,2.5){e}
				\rput(6.5,1.5){f}
				\rput(7.5,3.5){g}
				\rput(4.5,1.5){h}
				\rput(1.5,2.5){i}
				\rput(1.5,3.5){j}
				\rput(3.5,1.5){k}
				\rput(4.5,2.5){l}
				\psline[linestyle=dashed](9,1)(9,2)
				\psline[linestyle=dashed](10,2)(7,2)
				\psline[linestyle=dashed](7,0)(7,3)
				\psline[linestyle=dashed](7,1)(6,1)
				\psline[linestyle=dashed](8,2)(8,4)
				\psline[linestyle=dashed](9,4)(8,4)
				\psline[linestyle=dashed](5,1)(5,3)
				\psline[linestyle=dashed](5,3)(8,3)
			\end{pspicture}
			\end{center}
We have $b\leq 3$, but if $b=3$ then $a=d=1$ which is not allowed. So either $b=2$ and $a=1$ or symmetrically $b=1$ and $a=2$. We can assume $a=1$ and $b=2$ (dashed lines). It follows $d=2$ and $c=1$. Because $f\geq 2$ we must have $e=1$. But if $f\geq 3$ then $g=e=1$, so we must have $f=2$ (dashed lines). We must have $h\leq 3$. But if we had $h=3$ we would get $i=j=1$, which is not allowed. $h=2$ is not allowed because of $f=2$, so we must have $h=1$. But then $k=2$ and thus $l=1$, which is not allowed. This means that the center part of (A1) can't be filled.
			
The remaining center part of (B1):
	    \begin{center}
			\begin{pspicture}(0,0)(10,10)
				\pspolygon(1,0)(4,0)(4,2)(9,2)(9,1)(10,1)(10,4)(9,4)(9,8)(8,8)(8,9)(4,9)(4,10)(1,10)(1,9)(2,9)(2,4)(0,4)(0,1)(1,1)(1,0)
				\rput(0.5,1.5){a}
				\rput(1.5,0.5){b}
				\rput(2.5,0.5){c}
				\rput(0.5,3.5){d}
				\rput(1.5,3.5){e}
				\rput(3.5,0.5){f}
				\rput(3.5,1.5){g}
				\psline[linestyle=dashed](0,2)(3,2)
				\psline[linestyle=dashed](3,2)(3,0)
				\psline[linestyle=dashed](1,1)(1,2)
				\psline[linestyle=dashed](2,2)(2,4)
			\end{pspicture}
			\end{center}
We must have $a\leq 3$, but if $a=3$ then $b=c=1$ and if $a=2$ then $d=e=1$, thus we must have $a=1$. It follows $de=2$, thus $bc=2$ (dashed lines). But now $f=g=1$, which is not allowed. 
			
Center part of (B2):
	    \begin{center}
			\begin{pspicture}(0,0)(10,10)
				\pspolygon(0,1)(1,1)(1,2)(6,2)(6,0)(9,0)(9,1)(10,1)(10,4)(9,4)(9,8)(8,8)(8,9)(4,9)(4,10)(1,10)(1,9)(2,9)(2,4)(0,4)(0,1)
				\psline[linestyle=dashed](0,2)(1,2)
				\psline[linestyle=dashed](2,2)(2,4)
				\psline[linestyle=dashed](2,9)(2,10)
				\psline[linestyle=dashed](2,8)(4,8)
				\psline[linestyle=dashed](4,8)(4,9)
				\rput(2.5,7.5){a}
				\rput(2.5,2.5){b}
				\rput(2.5,6.5){c}
				\rput(3.5,7.5){d}
				\rput(4.5,8.5){e}
				\rput(5.5,8.5){f}
			\end{pspicture}
			\end{center}
We have to fill the left border from $a$ to $b$. First we observe that $a\neq 1$, because otherwise we had $c\geq 2$ and $d=1$. Analogously we have $b\neq 1$. So we have $a\geq 3$ and $b\geq 3$. But $a=b=3$ is not allowed, thus $ab=6$. But now $e=f=1$ which isn't allowed either.
			
Center part of (C1):
	    \begin{center}
			\begin{pspicture}(0,0)(10,10)
				\pspolygon(1,0)(4,0)(4,2)(9,2)(9,1)(10,1)(10,4)(8,4)(8,9)(9,9)(9,10)(6,10)(6,8)(1,8)(1,9)(0,9)(0,6)(2,6)(2,1)(1,1)(1,0)
				\psline[linestyle=dashed](2,0)(2,1)
				\psline[linestyle=dashed](2,2)(4,2)
				\psline[linestyle=dashed](2,8)(2,6)
				\psline[linestyle=dashed](1,8)(0,8)
				\psline[linestyle=dashed](6,8)(8,8)
				\psline[linestyle=dashed](8,10)(8,9)
				\psline[linestyle=dashed](8,4)(8,2)
				\psline[linestyle=dashed](9,2)(10,2)
			\end{pspicture}
			\end{center}
The outmost squares have to be 1x1, thus the neighboring squares have to be 2x2. The remaining 6x6 in the middle can't be furtherly dissected into squares because of Lemma \ref{least11}. Thus this is the only possible dissection of the center part of (C1) into squares. The whole 16x16 solution is shown in Figure 1 on page \pageref{fig1116}.
\end{proof}

\section{Size 17x17}
\label{Sect5}
\begin{lem}\label{no43535}The following pattern can't be completed: 5x5 in the corner with both neighbors 3x3 on the borders, and 4x4 squares neighboring the 3x3 on both borders (see below).\end{lem}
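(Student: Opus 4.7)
The plan is to exploit the two concave pockets the pattern creates just inside the corner: the pocket $P$ directly below the 3x3 on the first border (flush against the 5x5) and the symmetric pocket $P'$ directly beside the 3x3 on the second border. Let $p$ and $p'$ denote their sizes. The 3x3 above $P$ forbids $p=3$, and the 4x4 on the first border (at distance $3$ from $P$'s edge) forbids $p\ge 4$; hence $p\in\{1,2\}$, and by the diagonal symmetry of the pattern $p'\in\{1,2\}$.

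Next I would dispose of the case $p=1$. The square to the right of $P$ under the 3x3 cannot have size $1$ (full border with $P$) or $3$ (full border with the 3x3) and cannot exceed $2$ (the 4x4 blocks it), so it must be a 2x2. But then the residual $1\times 1$ gap directly below $P$ is trapped between the 5x5 on its left and this new 2x2 on its right, and any square covering it must itself be a $1\times 1$ --- which clashes with $P$ above. Hence $p\neq 1$, and symmetrically $p'\neq 1$.

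This leaves the main case $p=p'=2$. Each 2x2 leaves a $1$-wide slot against the neighbouring 4x4, forced to be a $1\times 1$ square (call them $Q$ and $Q'$), and then the square $R$ directly below $Q$ has a $1\times 1$ top neighbour, so $r\ge 2$. In particular, $R$ covers the cell immediately past the inner corner where $P$ meets the 5x5. Now consider the square $T$ at the inner corner of the 5x5, with $P$ as its top neighbour and $P'$ as its left neighbour: both are $2\times 2$, so $t\ne 2$, and any $t\ge 3$ forces $T$ across the cell already occupied by $R$. Hence $T$ is a $1\times 1$. Finally the square $S$ immediately right of $T$ (below $P$) satisfies $s\ne 1$ (full border with $T$), but any $s\ge 2$ again puts $S$ on top of $R$, a contradiction.

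The hard part will be this $p=p'=2$ case, and specifically the observation that the single forced square $R$ in the upper pocket simultaneously blocks $T$ from being larger than $1$ and $S$ from being at least $2$. The remaining case-checking is routine: local applications of Lemma~\ref{lem1} together with the basic rule that two equal-sized squares may not share a full border.
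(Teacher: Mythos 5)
Your argument is correct and follows essentially the same route as the paper: both pockets are forced to be filled by a $2\times 2$ plus a $1\times 1$, and the contradiction then comes from the three squares $R$, $T$, $S$ at the inner corner, which are exactly the paper's $a$, $b$, $c$ reflected across the diagonal of the configuration. The only substantive difference is that you explicitly justify the forced filling of the pockets (your case split $p\in\{1,2\}$ and the elimination of $p=1$), a step the paper merely asserts as ``the only way they can be filled.''
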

\begin{proof}
The described situation would look like this:
	    \begin{center}
			\begin{pspicture}(0,0)(13,13)
				\pspolygon(0,1)(4,1)(4,5)(3,5)(3,8)(5,8)(5,10)(8,10)(8,9)(12,9)(12,13)(0,13)(0,1)
				\psline(0,0)(0,1)
				\psline(12,13)(13,13)
				\psline(0,5)(3,5)
				\psline(0,8)(3,8)
				\psline(5,13)(5,10)
				\psline(8,13)(8,10)
				\psline[linestyle=dashed](3,6)(5,6)
				\psline[linestyle=dashed](4,5)(4,6)
				\psline[linestyle=dashed](5,6)(5,8)
				\psline[linestyle=dashed](5,8)(7,8)
				\psline[linestyle=dashed](7,8)(7,10)
				\psline[linestyle=dashed](7,9)(8,9)
				\rput(4.5,5.5){a}
				\rput(5.5,7.5){b}
				\rput(5.5,6.5){c}
			\end{pspicture}
			\end{center}
Here we already filled the gaps with dashed lines, because this is the only way they can be filled. But now we have $a\geq 2$, thus $b=1$, but then $c$ can't be filled.
\end{proof}

\begin{lem}\label{no17}There is no squared square of size 17x17.\end{lem}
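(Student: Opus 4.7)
The plan is to adapt the template of Lemmas \ref{no12}--\ref{no15} and \ref{lem16-3}. By Lemma \ref{lem3} some border of a $17\times 17$ dissection admits a decomposition into at least three elements. I first enumerate all such decompositions subject to Lemmas \ref{lem1} and \ref{lem2} (corner elements $\geq 4$, other border elements $\geq 3$, no two adjacent elements equal), noting that no decomposition into five or more parts is possible since the minimum would be $4+3+4+3+4=18>17$. Pruning with Lemmas \ref{lemx3y}, \ref{lem64}, \ref{lem43x} and \ref{lemx4y} leaves (up to reflection) the three-part decompositions $4+5+8$, $5+4+8$, $5+7+5$, $6+5+6$ and the four-part decompositions $4+3+4+6$, $4+5+3+5$, $5+3+4+5$.

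For each surviving top-border pattern I would draw the forced partial dissection and read off which decompositions the left and right borders can carry, since each of them must begin with the corner size already fixed at the top. Then the local ``smallest square in the gap'' reasoning from Lemmas \ref{no12}--\ref{no15} takes over: one chases the forced $1\times 1$, $2\times 2$ and $3\times 3$ squares inwards until either two equal squares become forced neighbours, or a sub-rectangle small enough to be forbidden by Lemma \ref{least11} appears. The newly proved Lemma \ref{no43535} is exactly the tool needed whenever the forced configuration around a $5\times 5$ corner develops the pattern $5+3+4+\dots$ on both incident borders---which is the trap that the four-part decomposition $5+3+4+5$ is designed to walk into when combined with itself at a shared corner.

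The main obstacle I expect is the two symmetric three-part decompositions $5+7+5$ and $6+5+6$, where the central square is large and the interior region below it requires several iterations of the same forced-filling argument before a contradiction surfaces; $5+7+5$ in particular forces fairly deep nested arguments because both adjacent borders must begin with $5$ but the only 17-decompositions starting with $5$ are themselves quite restricted. The four-part case $4+3+4+6$ is similarly delicate, since it is not immediately killed by any single earlier lemma and needs the full border-matching argument to extract the contradiction. Once all these cases are ruled out, no border of a nontrivial $17\times 17$ dissection can contain three or more elements, contradicting Lemma \ref{lem3}; hence no such squared square exists.
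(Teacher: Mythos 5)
Your setup matches the paper's: invoke Lemma \ref{lem3} to obtain a border with at least three elements, note that five or more elements would require length at least $4+3+4+3+4=18$, enumerate the admissible decompositions of $17$, and prune with Lemmas \ref{lemx3y}, \ref{lem64}, \ref{lem43x} and \ref{lemx4y}. One correction to the enumeration: $4+5+3+5$ does not survive the pruning, because its $3$ is flanked on the border by two squares of size $5$, and the argument of Lemma \ref{lemx3y} is purely local, so it applies here as well. The surviving list is therefore exactly yours minus that case: $4+3+4+6$, $4+5+8$, $5+3+4+5$, $5+4+8$, $5+7+5$, $6+5+6$. Keeping the extra case would cost you additional work but not correctness.

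The genuine gap is that your proof stops where the substance begins. For each surviving decomposition you describe the method --- propagate the pattern to the adjacent borders, then chase forced $1\times1$, $2\times2$ and $3\times3$ squares inward until a contradiction --- but you do not carry out a single one of these chases. This is not a formality: the identical procedure applied to arrangement (C1) in the $16\times16$ analysis of Lemma \ref{uni16} terminates in a valid squared square rather than a contradiction, so nothing guarantees in advance that all six $17$-cases fail; one must actually run the forced filling and exhibit two equal neighbouring squares, or an unfillable cell, in every branch. The paper does this at length: $4+5+8$ and $5+7+5$ each need one chain of forced squares; $5+4+8$ splits into two sub-cases, one resolved by Lemma \ref{no43535}; $5+3+4+5$ forces the same decomposition on all four borders and then needs Lemma \ref{no43535} plus a further chase; and $4+3+4+6$ together with $6+5+6$ splits into three corner arrangements (four, three, or two $6\times6$ corners), each with its own center-filling argument. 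Until those case analyses are written out and each is shown to end in a forbidden configuration, the lemma is not proved.
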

\begin{proof}First we list all possible 17-dissections into three or more elements which have minimum size 4x4 in the corners and minimum size 3x3 at the border. We list them in ascending numerical order:
\noindent{\small\begin{verbatim}(1) 4+3+4+6 (2) 4+3+6+4  (3) 4+3+10   (4) 4+5+3+5
(5) 4+5+8   (6) 4+6+3+4  (7) 4+6+7    (8) 4+7+6
(9) 4+8+5  (10) 4+9+4   (11) 5+3+4+5 (12) 5+3+5+4
(13) 5+3+9 (14) 5+4+3+5 (15) 5+4+8   (16) 5+7+5
(17) 5+8+4 (18) 6+3+8   (19) 6+4+3+4 (20) 6+4+7
(21) 6+5+6 (22) 6+7+4   (23) 7+3+7   (24) 7+4+6
(25) 7+6+4 (26) 8+3+6   (27) 8+4+5   (28) 8+5+4
(29) 9+3+5 (30) 10+3+4\end{verbatim}}
 
Of these 30, most appear twice (in both directions). The remaining ones:
\noindent{\small\begin{verbatim}(1) 4+3+4+6 (2) 4+3+6+4  (3) 4+3+10 (4) 4+5+3+5
(5) 4+5+8   (7) 4+6+7    (8) 4+7+6  (9) 4+8+5
(10) 4+9+4 (11) 5+3+4+5 (13) 5+3+9 (15) 5+4+8
(16) 5+7+5 (18) 6+3+8   (20) 6+4+7 (21) 6+5+6
(23) 7+3+7\end{verbatim}}
 
Now we also apply Lemma \ref{lemx3y}, \ref{lem64}, \ref{lem43x} and \ref{lemx4y} to cancel some out. The remaining dissections:
\noindent{\small\begin{verbatim}(1) 4+3+4+6 (5) 4+5+8 (11) 5+3+4+5 (15) 5+4+8
(16) 5+7+5 (21) 6+5+6\end{verbatim}}
 
We need to prove that none of these six 17-dissections can be completed to a squared square. First we do (5), (15), (16). Then (11). Then (1) and (21) together.
 
\textbf{(5): 4+5+8} If we start with 4+5+8 on the top border, we must extend the 4x4 in the square with the 4+3+4+6-dissection on the left border, because both 4+13 and 4+5+8 would overlap. Thus we get:
	    \begin{center}
			\begin{pspicture}(0,5)(17,17)
				\pspolygon(0,6)(4,6)(4,10)(3,10)(3,13)(4,13)(4,12)(9,12)(9,9)(17,9)(17,17)(0,17)(0,6)
				\psline[linestyle=dashed](3,12)(4,12)
				\psline[linestyle=dashed](5,12)(5,10)
				\psline[linestyle=dashed](4,10)(5,10)
				\psline(0,5)(0,6)
				\psline(4,6)(6,6)
				\psline(6,6)(6,5)
				\psline(0,10)(3,10)
				\psline(0,13)(3,13)
				\psline(4,17)(4,13)
				\psline(9,17)(9,12)
				\rput(5.5,11.5){a}
				\rput(6.5,11.5){b}
				\rput(5.5,10.5){c}
				\rput(8.5,11.5){d}
				\rput(8.5,10.5){e}
				\rput(4.5,9.5){f}
				\rput(4.5,8.5){g}
			\end{pspicture}
			\end{center}
So if we had $a=1$ then $b\geq 2$ and $c$ couldn't be filled. $a=2$ is not allowed. If we had $a=3$ then $d=e=1$. If we had $a=4$ then $f=g=1$, thus 4+5+8 doesn't lead to a squared sqare.

\textbf{(15): 5+4+8} If we start with 5+4+8, we must have the following situation:
	    \begin{center}
			\begin{pspicture}(0,9)(17,17)
				\pspolygon(0,12)(5,12)(5,13)(9,13)(9,9)(17,9)(17,17)(0,17)(0,12)
				\psline[linestyle=dashed](5,12)(6,12)
				\psline[linestyle=dashed](6,13)(6,10)
				\psline[linestyle=dashed](6,10)(9,10)
				\psline(5,13)(5,17)
				\psline(9,13)(9,17)
				\rput(0.5,11.5){a}
				\rput(5.5,11.5){b}
			\end{pspicture}
			\end{center}
If we had $a=3$, then $b$ couldn't be filled (because only $b=2$ would be allowed, and here we couldn't fill the gap between $b$ and $a$). Thus we have two cases for the left border: (i)~5+4+8 or (ii)~5+4+3+5 (top to bottom), which means $a=4$.

Case (i): 5+4+8 on the left border:
	    \begin{center}
			\begin{pspicture}(0,7)(17,17)
				\pspolygon(0,8)(4,8)(4,12)(5,12)(5,13)(9,13)(9,9)(17,9)(17,17)(0,17)(0,8)
				\psline(5,12)(6,12)
				\psline(6,13)(6,10)
				\psline(6,10)(9,10)
				\psline(5,13)(5,17)
				\psline(0,12)(4,12)
				\psline(9,13)(9,17)
				\psline(0,8)(0,7)
				\psline(4,8)(8,8)
				\psline(8,8)(8,7)
				\rput(0.5,11.5){a}
				\rput(5.5,11.5){b}
				\rput(4.5,9.5){c}
				\rput(4.5,8.5){d}
			\end{pspicture}
			\end{center}
Here we must have $b=2$, but now $c=1$ and $d$ can't be filled.

So it remains case (ii): 5+4+3+5 on the left border. Here, if the right border of the whole square would be 8+9, then we would have $5+x+9$ on the bottom border, which is not possible. Thus the right border must contain at least three elements and thus has to be 8+4+5.
But now (from symmetrical argumentation like above) we must have 5+3+4+5 on the bottom (left to right):
	    \begin{center}
			\begin{pspicture}(0,0)(4,4)
				\rput(0.5,3.5){\textbf{5}}
				\rput(2,3.5){4}
				\rput(3.5,3.5){\textbf{8}}
				\rput(0.5,2.5){4}
				\rput(3.5,2){4}
				\rput(0.5,1.5){3}
				\rput(0.5,0.5){\textbf{5}}
				\rput(1.5,0.5){3}
				\rput(2.5,0.5){4}
				\rput(3.5,0.5){\textbf{5}}
				\psline(0,0)(0,4)
				\psline(0,4)(4,4)
				\psline(4,4)(4,0)
				\psline(4,0)(0,0)
			\end{pspicture}
			\end{center}
But now Lemma \ref{no43535} applies to the bottom left corner and tells us that the center part of this can't be completed, thus 5+4+8 doesn't lead to a squared square.

\textbf{(16): 5+7+5} Now we start with 5+7+5 on the top border. To fill the left (and right) border, we have only two 17-dissections left which start with a corner-5: 5+7+5 or 5+3+4+5. But 5+7+5 on the left (or right) border would overlap with the 7x7 on the top border. So we must have 5+3+4+5 on the left (and right) border. And the 3x3 border squares must be on top of the 4x4 border squares, otherwise we would get a gap between the 4x4 on the left (right) border and the 7x7 on the top border. Thus we get:
	    \begin{center}
			\begin{pspicture}(0,0)(17,17)
				\pspolygon(0,0)(5,0)(5,5)(4,5)(4,9)(3,9)(3,12)(5,12)(5,10)(12,10)(12,12)(14,12)(14,9)(13,9)(13,5)(12,5)(12,0)(17,0)(17,17)(0,17)(0,0)
				\psline[linestyle=dashed](3,10)(5,10)
				\psline[linestyle=dashed](4,10)(4,9)
				\psline[linestyle=dashed](12,10)(14,10)
				\psline[linestyle=dashed](13,10)(13,9)
				\psline[linestyle=dashed](5,3)(8,3)
				\psline[linestyle=dashed](8,0)(8,4)
				\psline[linestyle=dashed](12,4)(7,4)
				\psline[linestyle=dashed](7,3)(7,5)
				\psline[linestyle=dashed](7,5)(5,5)
				\psline[linestyle=dashed](5,0)(12,0)
				\psline(0,5)(4,5)
				\psline(0,9)(3,9)
				\psline(0,12)(3,12)
				\psline(5,12)(5,17)
				\psline(12,12)(12,17)
				\psline(14,12)(17,12)
				\psline(14,9)(17,9)
				\psline(13,5)(17,5)
				
				\rput(4.5,5.5){a}
				\rput(4.5,6.5){b}
				\rput(7.5,4.5){c}
				\rput(4.5,9.5){d}
				\rput(6.5,9.5){e}
				\rput(6.5,8.5){f}
				\rput(6.5,6.5){g}
				\rput(6.5,5.5){h}
				\rput(5.5,5.5){i}
			\end{pspicture}
			\end{center}
Now we can't have 5+7+5 on the bottom border (otherwise $a=b=1$), so we have 5+4+3+5 or 5+3+4+5 on the bottom. Because we don't distinguish symmetric solutions we can assume the dahed decomposition of the bottom border.
Next we already marked the 2x2 and 1x1 squares in dashed lines. Now we will show that $a$ can't be filled: If $a\geq 4$ then $c=1$ which isn't allowed. If $a=3$ then $d=2$ but then $e=f=1$ which is not allowed. If $a=2$ then $d=3$ but then $g=h=1$ which is not allowed. If $a=1$ then $b\geq 2$ but then $i=1$ which is not allowed. Thus $a$ can't be filled, thus no squared square can contain a 5+7+5 border.

\textbf{(11): 5+3+4+5} If we start with 5+3+4+5 on the top border, we need to complete the left border starting with a 5x5 square. The square $x$ can't reach the bottom, otherwise $x=17-5=12$ and $l=h=1$. So the left border has to be dissected into at least three elements, but the only possible 17-dissection with three or more elements is 5+3+4+5. Analogously we deduce that we must have 5+3+4+5 on \emph{every} border. The only choice we seem to have is the direction (on every border: 5+3+4+5 or 5+4+3+5). But Lemma \ref{no43535} says that in \emph{this} dissection it is not possible to have 3x3's on both sides around a corner, thus the only possible arrangement is the following (or the mirrored version):
	    \begin{center}
			\begin{pspicture}(0,0)(17,17)
			  \rput(0.5,11.5){x}
				\pspolygon(0,0)(17,0)(17,17)(0,17)(0,0)
				\pspolygon(5,5)(5,4)(9,4)(9,3)(12,3)(12,5)(13,5)(13,9)(14,9)(14,12)(12,12)(12,13)(8,13)(8,14)(5,14)(5,12)(4,12)(4,8)(3,8)(3,5)(5,5)
				\psline[linestyle=dashed](12,10)(14,10)
				\psline[linestyle=dashed](13,10)(13,9)
				\psline[linestyle=dashed](12,10)(12,12)
				\psline[linestyle=dashed](9,4)(10,4)
				\psline[linestyle=dashed](10,3)(10,5)
				\psline[linestyle=dashed](10,5)(12,5)
				\psline[linestyle=dashed](4,8)(4,7)
				\psline[linestyle=dashed](3,7)(5,7)
				\psline[linestyle=dashed](5,7)(5,5)
				\psline[linestyle=dashed](5,12)(7,12)
				\psline[linestyle=dashed](7,12)(7,14)
				\psline[linestyle=dashed](7,13)(8,13)
				\psline(5,0)(5,4)
				\psline(9,0)(9,3)
				\psline(12,0)(12,3)
				\psline(13,5)(17,5)
				\psline(14,9)(17,9)
				\psline(14,12)(17,12)
				\psline(12,13)(12,17)
				\psline(8,14)(8,17)
				\psline(5,14)(5,17)
				\psline(0,12)(4,12)
				\psline(0,8)(3,8)
				\psline(0,5)(3,5)
				\rput(12.5,5.5){a}
				\rput(9.5,4.5){b}
				\rput(12.5,6.5){c}
				\rput(11.5,5.5){d}
				\rput(12.5,9.5){e}
				\rput(10.5,6.5){f}
				\rput(10.5,5.5){g}
				\rput(11.5,12.5){h}
				\rput(11.5,10.5){i}
				\rput(10.5,10.5){j}
				\rput(9.5,11.5){k}
				\rput(9.5,12.5){l}
				\rput(10.5,9.5){m}
				\rput(10.5,8.5){n}
				\psline[linestyle=dotted,linewidth=1pt](10,5)(10,8)
				\psline[linestyle=dotted,linewidth=1pt](10,8)(13,8)
				\psline[linestyle=dotted,linewidth=1pt](11,8)(11,10)
				\psline[linestyle=dotted,linewidth=1pt](11,10)(12,10)
				\psline[linestyle=dotted,linewidth=1pt](11,10)(11,11)
				\psline[linestyle=dotted,linewidth=1pt](12,11)(10,11)
				\psline[linestyle=dotted,linewidth=1pt](10,11)(10,13)
			\end{pspicture}
			\end{center}
Again, we already marked the 1x1 and 2x2 squares on the border of the remaining center part.
First we observe that $a\leq 3$, otherwise $b$ can't be filled. Then we observe that $a\neq 1$ because otherwise $c\geq 2$ and $d$ couldn't be filled. If we had $a=2$ we would get $e=3$, but then $f=g=1$ which isn't allowed. So the only remaining possibility is $a=3$ (and thus $e=2$) (dotted lines). Now if $h=3$ then $mn$ can't be filled. Thus $h=2$ and $i=1$ (dotted). But now we must have $j\geq 2$, but now $kl$ can't be filled. Thus 5+3+4+5 doesn't lead to a squared square either.

\textbf{(1): 4+3+4+6 and (21): 6+5+6} These are now the only remaining dissections of the 17-border into three or more elements. If we start with 6+5+6 on the top border, we can't have only two squares on the left (or right) border (because we couldn't fill the gap underneath the top 5x5 square). The same is true if we start with the 4+3+4+6 (because we couldn't fill the gap beneath the top center 4x4 square). But these two dissections can be combined in different ways, leading to different squares in the corners (4x4 or 6x6):
\begin{itemize}
\item If \emph{four} 6x6 squares are in the corners, the border-dissections must be 6+5+6 on every border (see I).
\item If \emph{three} 6x6 squares are in the corners, there must be 6+5+6 on two borders and 6+4+3+4 on two borders (see II).
\item If \emph{two} 6x6 squares are in the corners, there must be 6+4+3+4 on every border (see III).
\item \emph{one or no} 6x6 squares in the corners is impossible.
\end{itemize}
	    \begin{center}
			\begin{pspicture}(0,0)(16,4)
				\rput(0.5,3.5){\textbf{6}}
				\rput(2,3.5){5}
				\rput(3.5,3.5){\textbf{6}}
				\rput(0.5,2){5}
				\rput(3.5,2){5}
				\rput(0.5,0.5){\textbf{6}}
				\rput(2,0.5){5}
				\rput(3.5,0.5){\textbf{6}}
				\rput(2,2){(I)}
				\psline(0,0)(0,4)
				\psline(0,4)(4,4)
				\psline(4,4)(4,0)
				\psline(4,0)(0,0)
				
				\rput(6.5,3.5){\textbf{6}}
				\rput(8,3.5){5}
				\rput(9.5,3.5){\textbf{6}}
				\rput(6.5,2){5}
				\rput(9.5,2.5){4}
				\rput(9.5,1.5){3}
				\rput(6.5,0.5){\textbf{6}}
				\rput(7.5,0.5){4}
				\rput(8.5,0.5){3}
				\rput(9.5,0.5){\textbf{4}}
				\rput(8,2){(II)}
				\psline(6,0)(6,4)
				\psline(6,4)(10,4)
				\psline(10,4)(10,0)
				\psline(10,0)(6,0)
				
				\rput(12.5,3.5){\textbf{6}}
				\rput(13.5,3.5){4}
				\rput(14.5,3.5){3}
				\rput(15.5,3.5){\textbf{4}}
				\rput(12.5,2.5){4}
				\rput(15.5,2.5){3}
				\rput(12.5,1.5){3}
				\rput(15.5,1.5){4}
				\rput(12.5,0.5){\textbf{4}}
				\rput(13.5,0.5){3}
				\rput(14.5,0.5){4}
				\rput(15.5,0.5){\textbf{6}}
				\rput(14,2){(III)}
				\psline(12,0)(12,4)
				\psline(12,4)(16,4)
				\psline(16,4)(16,0)
				\psline(16,0)(12,0)
			\end{pspicture}
			\end{center}
The remaining center part of (I):
	    \begin{center}
			\begin{pspicture}(0,0)(7,7)
				\pspolygon(1,1)(1,0)(6,0)(6,1)(7,1)(7,6)(6,6)(6,7)(1,7)(1,6)(0,6)(0,1)(1,1)
				\psline[linestyle=dashed](0,2)(3,2)
				\psline[linestyle=dashed](3,0)(3,2)
				\psline[linestyle=dashed](1,1)(1,2)				
				\rput(1.5,0.5){a}
				\rput(0.5,1.5){b}
				\rput(0.5,2.5){c}
				\rput(5.5,0.5){d}
				\rput(6.5,2.5){e}
				\rput(6.5,1.5){f}
				\rput(3.5,1.5){g}
				\rput(3.5,0.5){h}
				\rput(4.5,0.5){i}
			\end{pspicture}
			\end{center}
We must have either $a=1$ or $b=1$. Thus we can assume $b=1$. But if $a\geq 3$ then $c$ can't be filled, thus we must have $a=2$ (dashed lines).
Now we must have $d\leq 2$, otherwise $e=f=1$. But if $d=2$ then $gh$ can't be filled. And if $d=1$, $i$ can't be filled.

The remaining center part of (II):
	    \begin{center}
			\begin{pspicture}(0,0)(9,9)
				\pspolygon(1,1)(5,1)(5,0)(8,0)(8,1)(9,1)(9,4)(8,4)(8,8)(6,8)(6,9)(1,9)(1,8)(0,8)(0,3)(1,3)(1,1)
				\psline[linestyle=dashed](4,1)(4,4)
				\psline[linestyle=dashed](6,4)(4,4)
				\psline[linestyle=dashed](6,2)(6,5)
				\psline[linestyle=dashed](8,5)(6,5)
				\psline[linestyle=dashed](6,3)(9,3)
				\psline[linestyle=dashed](8,3)(8,4)
				\psline[linestyle=dashed](7,0)(7,3)
				\psline[linestyle=dashed](7,1)(8,1)
				\psline[linestyle=dashed](4,2)(7,2)
				\psline[linestyle=dashed](5,1)(5,2)
				\rput(7.5,0.5){a}
				\rput(8.5,1.5){b}
				\rput(6.5,0.5){c}
				\rput(8.5,3.5){d}
				\rput(7.5,3.5){e}
				\rput(6.5,2.5){f}
				\rput(5.5,2.5){g}
				\rput(4.5,1.5){h}
				\rput(3.5,1.5){i}
				\rput(1.5,1.5){j}
				\rput(1.5,2.5){k}
				\rput(0.5,3.5){l}
				\rput(7.5,5.5){m}
				\rput(5.5,4.5){n}
				\rput(5.5,8.5){o}
				\psline[linestyle=dotted,linewidth=1pt](0,4)(4,4)
				\psline[linestyle=dotted,linewidth=1pt](1,3)(1,4)
				\psline[linestyle=dotted,linewidth=1pt](5,4)(5,9)
				\psline[linestyle=dotted,linewidth=1pt](5,5)(6,5)
				\psline[linestyle=dotted,linewidth=1pt](5,8)(6,8)
				\rput(0.5,4.5){p}
				\rput(0.5,6.5){q}
				\rput(0.5,7.5){r}
				\rput(4.5,4.5){s}
				\rput(3.5,6.5){t}
				\rput(4.5,6.5){u}
			\end{pspicture}
			\end{center}
Again we have either $a=1$ and $b=2$ or we have $a=2$ and $b=1$. Due to symmetry we assume $a=1$ and $b=2$. Thus $c=2$, $d=1$, $e=2$, $f=1$, $g=2$, $h=1$ (dashed). Now we must have $i\geq 2$, but $i=2$ would lead to $j=k=1$, thus $i=3$ and $l=1$ (dotted). Similarly, we have $m=3$, $n=1$, $o=1$ (also dotted). Now we show that $p\leq 4$ can't be filled: $p=1$ is not allowed. If $p=2$ then $q=1$ and $r$ can't be filled. If $p=3$ then $s=2$ and $tu$ can't be filled. If $p=4$ then $s$ can't be filled.

In the remaining center part of (III) we start by inserting the results from the beginning of the center part of (II) (continuous lines in the bottom/right part):
	    \begin{center}
			\begin{pspicture}(-2,0)(9,11)
				\pspolygon(1,1)(5,1)(5,0)(8,0)(8,1)(9,1)(9,4)(8,4)(8,8)(6,8)(6,10)(2,10)(2,11)(-1,11)(-1,10)(-2,10)(-2,7)(-1,7)(-1,3)(1,3)(1,1)
				\psline(4,1)(4,4)
				\psline(6,4)(4,4)
				\psline(6,2)(6,5)
				\psline(8,5)(6,5)
				\psline(6,3)(9,3)
				\psline(8,3)(8,4)
				\psline(7,0)(7,3)
				\psline(7,1)(8,1)
				\psline(4,2)(7,2)
				\psline(5,1)(5,2)				
				\rput(0.5,3.5){a}
				\rput(1.5,4.5){b}
				\rput(4.5,4.5){c}
				\rput(1.5,5.5){d}
				\rput(-1.5,7.5){e}
				\rput(-1.5,8.5){f}
				\rput(-0.5,10.5){g}
				\rput(0.5,10.5){h}
				\rput(0.5,8.5){i}
				\rput(1.5,8.5){j}
				\psline(1,4)(4,4)
				\psline(1,3)(1,4)
				\psline(5,4)(5,8)
				\psline(5,5)(6,5)
				\psline(5,8)(6,8)
				\psline[linestyle=dashed](-1,5)(2,5)
				\psline[linestyle=dashed](1,5)(1,4)
				\psline[linestyle=dashed](2,4)(2,8)
				\psline[linestyle=dashed](2,7)(5,7)
				\psline[linestyle=dashed](2,8)(-2,8)
				\psline[linestyle=dashed](-1,8)(-1,7)
				\psline[linestyle=dashed](0,8)(0,11)
				\psline[linestyle=dashed](-1,10)(0,10)
				\psline[linestyle=dashed](0,9)(2,9)
				\psline[linestyle=dashed](2,9)(2,10)
				\psline[linestyle=dashed](1,8)(1,9)
			\end{pspicture}
			\end{center}
First we conclude $a=2$, then $b=1$, then $c=3$, $d=3$, $e=1$, $f=2$, $g=1$, $h=2$, $i=1$ (dashed lines). But now $j$ can't be filled.
Thus we finally have proved that there is no squared square with size 17x17.
\end{proof}

\section{Final theorem}
\label{Sect6}
\begin{framed}\begin{thm}There are exactly two squared squares (and their mirrored versions) up to and including size 17x17: One of size 11x11 and one of size 16x16 -- both are shown in Figure 1 (page \pageref{fig1116}).\end{thm}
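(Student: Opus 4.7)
The plan is straightforward: this final theorem is essentially a corollary of the lemmas already established, so the proof is a matter of assembling them in order of increasing size. I would not need any new case analysis or constructions here; every obstacle has been dealt with in Sections~\ref{Sect2}--\ref{Sect5}.

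First I would invoke Lemma \ref{least11} to dispose of all sizes strictly below 11, observing that no squared square can exist at sizes 1 through 10 at all. Then at size 11 I would cite Lemma \ref{lem11}, which produces the 11x11 example from Figure 1 and shows it is the only squared square of that size up to reflection. For sizes 12, 13, 14 and 15 I would string together Lemmas \ref{no12}, \ref{no13}, \ref{no14} and \ref{no15} respectively, each of which rules out its size entirely. At size 16 I would apply Lemma \ref{uni16} to obtain the 16x16 example from Figure 1 as the unique squared square of that size up to reflection. Finally, Lemma \ref{no17} excludes size 17.

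Taking the disjoint union of these seven cases exhausts every integer size from 1 up to 17, and yields exactly the two squared squares (and their mirror images) displayed in Figure 1. Since nothing new is proved here, the only thing to be careful about is being explicit that ``up to and including size 17x17'' means we are taking the union over all sidelengths $n \in \{1,2,\dots,17\}$, and that ``exactly two (and their mirrored versions)'' matches the uniqueness-up-to-reflection statements of Lemma \ref{lem11} and Lemma \ref{uni16}. There is no genuine obstacle in this final step; the substantive work is entirely in the lemmas preceding it.
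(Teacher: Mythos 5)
Your proof is correct and is exactly the paper's own argument: the paper's proof of the theorem reads simply ``Combine Lemma \ref{least11}, \ref{lem11}, \ref{no12}, \ref{no13}, \ref{no14}, \ref{no15}, \ref{uni16} and \ref{no17}.'' Your more explicit case-by-case assembly over sidelengths $1$ through $17$ adds nothing new but is a faithful, slightly more verbose rendering of the same step.
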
\end{framed}
\begin{proof}
Combine Lemma \ref{least11}, \ref{lem11}, \ref{no12}, \ref{no13}, \ref{no14}, \ref{no15}, \ref{uni16} and \ref{no17}.
\end{proof}

\begin{acknowledgments}
I am grateful to Stuart~Anderson, Gregor~Milla and Kolja~Szillat for useful discussions.
\end{acknowledgments}

\end{document}